\newtheorem{theorem}{Theorem}[section]
\newtheorem{lemma}[theorem]{Lemma}
\theoremstyle{proposition}
\newtheorem{corollary}[theorem]{Corollary}
\newtheorem{definition}[theorem]{Definition}
\newtheorem{example}[theorem]{Example}
\theoremstyle{remark}
\newtheorem{remark}[theorem]{Remark}
\numberwithin{equation}{section}
\title[Join of affine semigroups]
{Join of affine semigroups}
\author{
Joydip Saha
\and
Indranath Sengupta
\and
Pranjal Srivastava
}
\date{}
\address{\small \rm  Stat Math Unit, Indian Statistical Institute, Kolkata, West-Bengal,700108, INDIA.} 
\email{saha.joydip56@gmail.com}
\thanks{The first author thanks NBHM, Government of India for the post-doc fellowship PDF/2019/001074.}
\address{\small \rm  Discipline of Mathematics, IIT Gandhinagar, Palaj, Gandhinagar, 
Gujarat 382355, INDIA.}
\email{indranathsg@iitgn.ac.in}
\thanks{The second author is the corresponding author. The second author thanks SERB, Government of India for the project CRG/2022/007047.}
\address{\small \rm  Discipline of Mathematics, IIT Gandhinagar, Palaj, Gandhinagar, Gujarat 382355, INDIA.}
\email{pranjal.srivastava@iitgn.ac.in}
\subjclass[2020]{Primary 13H10, 13P10, 13P20.}
\keywords{Affine semigroups, Simplicial affine semigroups, Cohen-Macaulay, Castelnuovo-Mumford regularity, Betti numbers, Gr\"{o}bner basis, Initial ideal, Generalized arithmetic sequence, Join.}
\begin{document}

\begin{abstract}
In this paper, we study the class of affine semigroup generated by integral vectors,  
whose components are in generalised arithmetic progression and we observe that the 
defining ideal is determinantal. We also give a sufficient condition on the defining ideal of 
the semigroup ring for the equality of the Betti numbers of the 
defining ideal and those of its initial ideal. We introduce the notion of an 
affine semigroup generated by join of two affine semigroups and show that 
this affine semigroup exhibits some nice properties including Cohen-Macaulayness. 
\end{abstract}
\maketitle

\section{Introduction}

Let $\Gamma$ be an affine semigroup, i.e., a finitely generated semigroup, which for 
some $r$ is isomorphic to a subsemigroup of $\mathbb{Z}^{r}$ containing $0$. Let 
$\mathbb{K}$ be a field and $\mathbb{K}[\Gamma]$ be an affine semigroup ring with 
$\Gamma$ fully embedded in $\mathbb{N}^{r}$, i.e., subalgebras of the polynomial ring 
$\mathbb{K}[t_{1},\dots,t_{r}]$ generated by the monomials with exponents in 
$\Gamma$. Many authors have studied the properties of the affine semigroup ring 
$\mathbb{K}[\Gamma]$ from the properties of the affine semigroup $\Gamma$; see 
\cite{Affine,Jafari-Type}.
\medskip

Let $\Gamma$ be a simplicial affine semigroup in $\mathbb{N}^{r}$, minimally 
generated by $\mathbf{a}_{1},\dots,\mathbf{a}_{r+n}$, with extremal rays 
$\mathbf{a}_{1},\dots,\mathbf{a}_{r}$ (see Definition \ref{Extremal rays}). Let 
$\mathbb{K}[\Gamma]:=\oplus_{\gamma \in \Gamma}\mathbb{K}\mathfrak{t}^{\gamma}\subset \mathbb{K}[t_{1},\dots,t_{r}]$ be the semigroup ring corresponding to semigroup $\Gamma$, where $t_{1},\dots,t_{r}$ are indeterminates 
and $\mathfrak{t}^\gamma=\prod_{i=1}^{r}t_{i}^{\gamma_{i}}$. 
Let $A:=\mathbb{K}[z_{1},\dots,z_{r+n}]$ and define the 
map $\phi:\mathbb{K}[z_{1},\dots,z_{r+n}]\rightarrow \mathbb{K}[\Gamma]\subseteq \mathbb{K}[\mathfrak{t}]$, 
such that $\phi(z_{i})=\mathfrak{t}^{\mathbf{a}_{i}}, 1\leq i \leq r+n$. The defining ideal 
of the coordinate ring of $\mathbb{K}[\Gamma]$ is $I_{\Gamma} := \mathrm{Ker}(\phi)$.
\medskip

In general, Herzog \cite{Herzog-initial}proved that for any term order $\prec$ on $A$ and 
ideal $I \subset A$, the following relation holds: $\beta_{i}(A/I) \leq \beta_{i}(A/\mathrm{in}_{<}(I))$, 
for all $i\geq 0$, where $\beta_{i}(A/I)$ denotes the $i^{th}$ total 
Betti number in the minimal free resolution of $A/I$.
\medskip

It is always interesting to find, under which condition, equality holds in the above 
inequality.
A. Boocher \cite{Boocher-Sparse} proved that equality holds for the ideal generated 
by the maximal minor of the sparse generic matrix. In section 3, we give a condition 
on a Gr\"{o}bner basis of defining ideal $I_{\Gamma}$ which is sufficient for the 
equality of the Betti numbers of the $I_{\Gamma}$ and those of $\mathrm{in}_{<}(I_{\Gamma})$ 
(see Theorem \ref{CM}).
\medskip

Finding simplical affine semigroup rings with Cohen-Macaulay (respectively Gorenstein) 
property is not easy. In case of numerical semigroups, gluing of numerical semigroups 
is one useful technique for producing Gorenstein monomial curves, however, 
this tehniques fails in higher dimension. Giminez-Srinivasan \cite{gluing} 
observed that an affine semigroup obtained by gluing of two Cohen-Macaulay simplicial 
affine semigroups need not be Cohen-Macaulay. 
\medskip

For producing simplicial affine semigroups with Cohen-Macaulay property 
(respectively Gorenstein property), we introduce the technique 
``Join of affine semigroups'' (See Definition \ref{Join}). 
We prove that the ring associated with the join of simplicial affine 
semigroups is Cohen-Macaulay (respectively Gorenstein) provided that ring 
associated with original ones are Cohen-Macaulay (respectively Gorenstein). 
We also prove that the tensor product of the minimal 
graded free resolutions of the rings associated with the simplicial affine 
semigroups is a minimal graded free resolution of the ring associated with the 
join of these simplicial affine semigroups.
\medskip

In the last section of this article, we study the affine semigroups minimally 
generated by integral vectors whose components are in generalized arithmetic 
progession. Many mathematicians have studied the numerical semigroups generated 
by the arithmetic sequences and generalized arithmetic sequences 
(see \cite{Gimenez-Arithmetic, Isabel}. This motivated us to study these 
type of semigroups in higher dimension. 
Let $\mathbf{a},\mathbf{d}\in \mathbb{N}^{r}$ be two linearly independent elements 
over the field of a rational number $\mathbb{Q}$. For $k,h \in \mathbb{N}$, we 
consider a sequence 
$\mathbf{a},h\mathbf{a}+\mathbf{d},h\mathbf{a}+2\mathbf{d},\dots,h\mathbf{a}+n\mathbf{d}$, 
such that the affine semigroup  $\Gamma^{r}_{\mathbf{a},\mathbf{d},h}:=\langle \mathbf{a},h\mathbf{a}+\mathbf{d},h\mathbf{a}+2\mathbf{d},\dots,h\mathbf{a}+n\mathbf{d} \rangle$ is minimally generated by this sequence. In this article, $\Gamma^{r}_{\mathbf{a},\mathbf{d},h}$ 
denotes an affine semigroup in $\mathbb{N}^{r}$ of embedding dimension $n+1$. 
Recently, Bhardwaj et al. \cite{Bhardwaj-arithmetic} studied the properties of $\Gamma^{2}_{\mathbf{a,d},1}$. They have found a minimal graded resolution only for $n=2,3,4$. In this article, we prove that the defining ideal of $\mathbb{K}[\Gamma^{2}_{\mathbf{a,d},1}]$ has a determinantal structure, hence a minimal graded resolution of $\Gamma^{r}_{\mathbf{a},\mathbf{d},h}$ can be computed using the Eagon-Northcott complex, for all values of $n$. 
\medskip

For $1\leq i \leq r, r \in \mathbb{N}$, let $\mathbf{a}_{i},\mathbf{d}_{i}\in  \mathbb{N}^{2r}$ 
be linearly independent elements over $\mathbb{Q}$. Consider a set $S_{i}=\{ \mathbf{a}_{i},h\mathbf{a}_{i}+\mathbf{d}_{i},\dots,h\mathbf{a}_{i}+n_{i}\mathbf{d}_{i}\}$. Assume $\{\mathbf{a}_{1},\mathbf{d}_{1},\dots,\mathbf{a}_{r},\mathbf{d}_{r}\}$ is linearly independent over $\mathbb{Q}$. The affine semigroup $\mathfrak{S}_{r}=\big\langle \cup_{i=1}^{r}\langle S_{i}\rangle\big\rangle$ is 
called the join of the semigroups $\langle S_{i}\rangle$. It is a simplicial affine semigroup with extremal rays 
$\{\mathbf{a}_{1},\mathbf{d}_{1},\dots,\mathbf{a}_{r},\mathbf{d}_{r}\}$. We prove that 
$\mathbb{K}[\mathfrak{S}_{r}]$ has several properties like Koszul, Cohen-Macaulay etc., and 
the associated graded ring $\mathrm{gr}_{\mathfrak{m}}(\mathbb{K}[\mathfrak{S}_{r}])$ is Cohen-Macaulay with regularity $2$.

\section{Preliminaries}

\begin{definition}{\rm 
Let $\Gamma$ be an affine semigroup, fully embedded in $\mathbb{N}^{r}$, minimally generated by $\mathbf{a}_{1},\dots,\mathbf{a}_{n+r}$. The \textit{rational polyhedral cone} generated by $\Gamma$ is defined as
\[
 \mathrm{cone}(\Gamma)=\big\{\sum_{i=1}^{n+r}\alpha_{i}\mathbf{a}_{i}: \alpha_{i} \in \mathbb{R}_{\geq 0}, \,i=1,\dots,r+n\big\}.
 \]
 The \textit{dimension} of $\Gamma$ is defined as the dimension of the subspace generated by $\mathrm{cone}(\Gamma)$.
 }
\end{definition} 
   
The $\mathrm{cone}(\Gamma)$ is the intersection of finitely many closed linear half-spaces in $\mathbb{R}^{r}$, 
each of whose bounding hyperplanes contains the origin. These half-spaces are called \textit{support hyperplanes}. 

\begin{definition}\label{Extremal rays} {\rm 
Suppose $\Gamma$ is an affine semigroup, fully embedded in $\mathbb{N}^{r}$. If $r=1$, $\mathrm{cone}(\Gamma)=\mathbb{R}_{\geq 0}$.
If $r=2$, the support hyperplanes are one-dimensional vector spaces, which are called the 
\textit{extremal rays} of $\mathrm{cone}(\Gamma)$. If $r >2$, intersection of any two
adjacent support hyperplanes is a one-dimensional vector space, called an extremal ray 
of $\mathrm{cone}(\Gamma)$. An element of $\Gamma$ is called an extremal ray of $\Gamma$ 
if it is the smallest non-zero vector of $\Gamma$ in an extremal ray of $\mathrm{cone}(\Gamma)$. 
}
\end{definition}

\begin{definition}\label{Extremal}{\rm 
An affine semigroup $\Gamma$, fully embedded in $\mathbb{N}^{r}$, is said to be \textit{simplicial}   
if the $\mathrm{cone}(\Gamma)$ has atleast $r$ extremal rays, i.e., if there exist $r$ elements,  say 
$\{\mathbf{a}_{1},\dots,\mathbf{a}_{r}\} \subset \{\mathbf{a}_{1},\dots,\mathbf{a}_{r},\mathbf{a}_{r+1},\dots,\mathbf{a}_{r+n}\}$, such that they 
are linearly independent over $\mathbb{Q}$ and $\Gamma \subset \sum\limits_{i=1}^{r}\mathbb{Q}_{\geq 0}\mathbf{a}_{i}$.
}
\end{definition}

In this paper, $\Gamma$ always denotes a simplicial affine semigroup minimally generated by 
$\{\mathbf{a}_{1}.\dots,\mathbf{a}_{r},\mathbf{a}_{r+1},\dots,\mathbf{a}_{r+n}\}$, with the set of extremal rays $E=\{\mathbf{a}_{1}.\dots,\mathbf{a}_{r}\}$. 
The semigroup ring defined by $\Gamma$ is written as $\mathbb{K}[\Gamma]=\mathbb{K}[\mathfrak{t}^{\mathbf{a}_{1}},\dots,\mathfrak{t}^{\mathbf{a}_{r+n}}]$.

\begin{definition}\emph{
The \textit{Ap\'{e}ry set} of $\Gamma$ with respect to an element $\mathbf{b} \in \Gamma$ is defined as $\{\mathbf{a} \in \Gamma: \mathbf{a}-\mathbf{b} \notin \Gamma\}$. Let $E=\{\mathbf{a}_{1},\dots,\mathbf{a}_{r}\}$ be a set of extremal rays of $\Gamma$, then the Ap\'{e}ry set of $\Gamma$ with respect to the set $E$ is
\[
\mathrm{AP}(\Gamma,E)=\{a \in \Gamma \mid \mathbf{a}-\mathbf{a}_{i} \notin \Gamma, \, \forall i=1,\dots,r\}=\cap_{i=1}^{r}\mathrm{AP}(\Gamma,\mathbf{a}_{i}).
\]}
\end{definition}

For an affine semigroup $\Gamma$, consider the natural partial ordering 
$\prec_{\Gamma}$ on $\mathbb{N}$, such that for all elements 
$x,y \in \mathbb{N}^{r}$, $x \prec_{\Gamma} y \,\,\, \text{if} \,\,\, y-x \in \Gamma$. 

\begin{definition}[\cite{Jafari-Type}]{\rm
Let $\mathbf{b} \in \mathrm{max}_{\prec_{\Gamma}} \mathrm{Ap}(\Gamma,E)$; the element 
$\mathbf{b}-\sum_{i=1}^{r}\mathbf{a}_{i} $ is called a \emph{quasi-Frobenius} element of $\Gamma$. 
The set of all quasi-Frobenius elements of $\Gamma$ is denoted by $\mathrm{QF}(\Gamma)$ 
and its cardinality is said to be the \emph{type} of $\Gamma$, denoted by $\mathrm{type}(\Gamma)$. 
}
\end{definition}

\begin{remark}\cite[Proposition 3.3]{Jafari-Type}
If $\mathbb{K}[\Gamma]$ is arithmetically Cohen-Macaulay, then the last Betti 
number of $\mathbb{K}[\Gamma]$ is called the Cohen-Macaulay type of 
$\mathbb{K}[\Gamma]$, written as $\mathrm{CMtype}(\mathbb{K}[\Gamma])$. 
Moreover, $\mathrm{type}(\Gamma)=\mathrm{CMtype}(\mathbb{K}[\Gamma])$.
\end{remark}

\medskip
\noindent\textbf{The Eagon-Northcott Complex.} Let us recall some basics facts 
on the Eagon-Nothcott complex from \cite{eisenbud}. Let $F=R^{f}$ and $G=R^{g}$ be free modules of finite rank 
over the polynomial ring $R$. The \textit{Eagon-Northcott complex} of a map 
$\alpha: F\longrightarrow G$ (or that of a matrix $A$ representing $\alpha$) 
is a complex 
\begin{eqnarray*}
{EN}(\alpha): 
0\rightarrow (Sym_{f-g}G)^{*}\otimes \wedge^{f}F\stackrel{d_{f-g+1}}\longrightarrow (Sym_{f-g-1}G)^{*}
\otimes \wedge^{f-1}F\stackrel{d_{f-g}} 
\longrightarrow\\
\cdots\longrightarrow(Sym_{2}G)^{*}\otimes
\wedge^{g+2}F\stackrel{d_3}\longrightarrow G^{*}\otimes 
\wedge^{g+1}F\stackrel{d_{2}}\wedge^{g}F\stackrel{\wedge^{g}\alpha}\longrightarrow \wedge^{g} G.
\end{eqnarray*} 
Here $Sym_{k}G$ is the $k$-th symmetric power of G and $M^{*} = {\rm Hom}_{R}(M,R)$. 
The map $d_{j}$ are defined as follows. First we define a diagonal map 
\begin{eqnarray*}
(Sym_{k}G)^{*} & \rightarrow & G^{*}\otimes (Sym_{k-1}G)^{*}\\
u & \mapsto & \sum_{i}u_{i}^{'}\otimes u_{i}^{''}
\end{eqnarray*}
as the dual of the multiplication map $G\otimes Sym_{k-1}G\longrightarrow Sym_{k}G$ 
in the symmetric algebra of $G$. Next we define an analogous diagonal map 
\begin{eqnarray*}
\wedge^{k}F & \longrightarrow & F\otimes \wedge^{k-1}F\\
v & \mapsto & \sum_{i}v_{i}^{'}\otimes v_{i}^{''}
\end{eqnarray*} 
as the dual of the multiplication in the exterior algebra of $F^{*}$.
\medskip

\begin{theorem}[Eagon-Northcott]\label{Eagon-Northcott} The Eagon-Northcott complex is a free resolution of $R/I_{g}(\alpha)$ iff grade$(I_{g}(\alpha))=f-g+1$
where $I_{g}(\alpha)$ denotes the $g\times g$ minors of the matrix $A$ representing 
$\alpha$.
\end{theorem}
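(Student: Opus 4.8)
The plan is to derive the statement from the Buchsbaum--Eisenbud acyclicity criterion (see \cite{eisenbud}). Reindex the Eagon--Northcott complex as a complex of free modules
$\mathrm{EN}(\alpha)\colon 0\to F_{n}\xrightarrow{d_{n}}F_{n-1}\to\cdots\to F_{1}\xrightarrow{d_{1}}F_{0}$
with $n=f-g+1$, where $F_{0}=\wedge^{g}G\cong R$, $F_{1}=\wedge^{g}F$, $F_{k}=(\mathrm{Sym}_{k-1}G)^{*}\otimes\wedge^{g+k-1}F$ for $2\le k\le n$, and the augmentation $d_{1}=\wedge^{g}\alpha$. The first observation is that $\mathrm{image}(\wedge^{g}\alpha)$ is precisely the ideal $I_{g}(\alpha)$ of $g\times g$ minors, so $\mathrm{coker}(d_{1})=R/I_{g}(\alpha)$; consequently ``$\mathrm{EN}(\alpha)$ is a free resolution of $R/I_{g}(\alpha)$'' is equivalent to ``$\mathrm{EN}(\alpha)$ is acyclic'' (exact in positive homological degrees).

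For the ``if'' direction, assume $\mathrm{grade}(I_{g}(\alpha))=f-g+1$. By the Buchsbaum--Eisenbud criterion it suffices to verify, for each $k$, the rank identity $\mathrm{rank}\,F_{k}=\mathrm{rank}\,d_{k}+\mathrm{rank}\,d_{k+1}$ with the expected ranks, and the grade bound $\mathrm{grade}\,I_{r_{k}}(d_{k})\ge k$, where $r_{k}=\mathrm{rank}\,d_{k}$. The rank identities are binomial-coefficient identities among the numbers $\mathrm{rank}\,F_{k}=\binom{g+k-2}{k-1}\binom{f}{g+k-1}$; the cleanest way to certify them is to localize at a prime $\mathfrak{p}\not\supseteq I_{g}(\alpha)$, at which $\alpha_{\mathfrak{p}}$ is a split surjection and $\mathrm{EN}(\alpha)_{\mathfrak{p}}$ is the (visibly split exact) Eagon--Northcott complex of a split surjection, forcing the expected ranks globally. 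For the grade bound, the key point is the Fitting-ideal computation $\sqrt{I_{r_{k}}(d_{k})}=\sqrt{I_{g}(\alpha)}$ for all $1\le k\le n$: away from $V(I_{g}(\alpha))$ the complex is split exact so each $I_{r_{k}}(d_{k})$ localizes to the unit ideal, while on $V(I_{g}(\alpha))$ the rank of $d_{k}$ drops below $r_{k}$, and this is read off from the explicit description of $d_{k}$ via the comultiplication (diagonal) maps on $\mathrm{Sym}\,G$ and $\wedge F$ recalled above. Since grade depends only on the radical, $\mathrm{grade}\,I_{r_{k}}(d_{k})=\mathrm{grade}\,I_{g}(\alpha)=f-g+1\ge k$ for every relevant $k$, and acyclicity follows.

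For the ``only if'' direction, suppose $\mathrm{EN}(\alpha)$ is a resolution of $R/I_{g}(\alpha)$. The classical height bound for proper determinantal ideals gives $\mathrm{grade}\,I_{g}(\alpha)\le\mathrm{ht}\,I_{g}(\alpha)\le f-g+1$ unconditionally. For the reverse inequality, exactness of $\mathrm{EN}(\alpha)$ together with the Buchsbaum--Eisenbud criterion (used now in the direction exactness $\Rightarrow$ grade conditions) yields $\mathrm{grade}\,I_{r_{n}}(d_{n})\ge n=f-g+1$; since $\sqrt{I_{r_{n}}(d_{n})}=\sqrt{I_{g}(\alpha)}$ by the computation above, we conclude $\mathrm{grade}\,I_{g}(\alpha)\ge f-g+1$, hence equality.

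The step I expect to be the crux is the Fitting-ideal identity $\sqrt{I_{r_{k}}(d_{k})}=\sqrt{I_{g}(\alpha)}$, equivalently the assertion that $\mathrm{EN}(\alpha)$ becomes split exact after inverting any single $g\times g$ minor of $\alpha$: this is the one place where the specific combinatorial structure of the Eagon--Northcott differentials is genuinely used, and it is also what makes the ``universal'' case work --- taking $\alpha$ to be the matrix of indeterminates over $\mathbb{Z}[x_{ij}]$, where $\mathrm{grade}\,I_{g}(\alpha)=f-g+1$ holds automatically. An alternative organization of the whole argument is to first prove exactness of $\mathrm{EN}(\alpha)$ in this universal case and then invoke generic perfection (Hochster--Eagon) to specialize to an arbitrary ring $R$ and matrix $\alpha$ whose ideal of $g\times g$ minors has the maximal possible grade $f-g+1$.
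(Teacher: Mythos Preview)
Your proposal is correct and follows the standard route via the Buchsbaum--Eisenbud acyclicity criterion. The paper itself does not give a proof of this theorem at all: it simply refers the reader to \cite[Corollary A2.12]{eisenbud}, so your sketch is in fact considerably more detailed than what appears in the paper, and it is essentially the argument one finds at that reference.
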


\proof See \cite[Corollary A2.12]{eisenbud}.\qed
\bigskip

\section{Betti numbers of the initial ideal of Semigroup rings}

In this section, we give a condition on the defining ideal of the 
semigroup ring which is sufficient for the equality of the 
Betti numbers of the defining ideal and the Betti numbers of 
its initial ideal, with respect to given monomial order.

\begin{theorem}\label{CM}
Let $A/I_{\Gamma}$ be a Cohen-Macaulay. 
Let $G=\{f_{1},\dots,f_{s}\}$ be a minimal Gr\"{o}bner basis of the defining ideal $I_{\Gamma}$, 
with respect to a monomial ordering $>$. Suppose that $A/\mathrm{in}_{>}(I_{\Gamma})$ is 
Cohen-Macaulay and there exist $z_{j}$, $1\leq j \leq r$, such that $z_{j}$ belongs to the 
support of $f_{l}$ and does not divide $\mathrm{LM}(f_{l})$, 
for every $1 \leq l \leq s $, then $\,\, \beta_{i}(A/I_{\Gamma})=\beta_{i}(A/\mathrm{in}_{>}(I_{\Gamma}))$, 
for all $i \geq 1$. 
\end{theorem}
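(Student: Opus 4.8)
The plan is to compare the two minimal free resolutions via the generic initial-degeneration / Rees-algebra deformation that connects $A/I_\Gamma$ to $A/\mathrm{in}_>(I_\Gamma)$, together with Herzog's inequality $\beta_i(A/I_\Gamma)\le\beta_i(A/\mathrm{in}_>(I_\Gamma))$ quoted in the introduction. Since both rings are Cohen--Macaulay of the same dimension (the initial ideal has the same Hilbert function, hence the same dimension, and a flat degeneration preserves depth equalling dimension), it suffices to show that the \emph{last} Betti numbers agree: $\beta_c(A/I_\Gamma)=\beta_c(A/\mathrm{in}_>(I_\Gamma))$, where $c=\mathrm{height}(I_\Gamma)$. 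Indeed, for a Cohen--Macaulay quotient the last Betti number is the rank of the canonical module, equivalently $\dim_{\mathbb K}\mathrm{Ext}^c_A(A/I,A)$ up to shift, and one can read it off the Hilbert series of the canonical module; since $A/I_\Gamma$ and $A/\mathrm{in}_>(I_\Gamma)$ share the same Hilbert series, their canonical modules have the same Hilbert series, and so $\beta_c$ coincide. Then, by downward induction using Herzog's termwise inequality together with the alternating-sum (Euler characteristic / Hilbert series) constraint $\sum_i(-1)^i\beta_i(A/I)\,$-graded pieces being equal for the two rings, equality propagates to all homological degrees.

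The role of the hypothesis on $G$ is to make the degeneration \emph{harmless} homologically; more precisely, I would use it to show the associated graded object has no extra syzygies by exhibiting a nonzerodivisor that survives the degeneration. Concretely: pick the variable $z_j$ ($1\le j\le r$, i.e.\ $z_j$ corresponds to an extremal ray) guaranteed by the hypothesis. For each Gröbner basis element $f_l$, the variable $z_j$ appears in $\mathrm{supp}(f_l)$ but not in $\mathrm{LM}(f_l)$; hence in $\mathrm{in}_>(I_\Gamma)$ the leading monomial of every generator is coprime to $z_j$, so $z_j$ is a nonzerodivisor on $A/\mathrm{in}_>(I_\Gamma)$. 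On the other side, one checks $\phi(z_j)=\mathfrak t^{\mathbf a_j}$ with $\mathbf a_j$ an extremal ray, and extremal-ray generators are always nonzerodivisors on $\mathbb K[\Gamma]=A/I_\Gamma$ (this is the standard fact that $\{\mathfrak t^{\mathbf a_1},\dots,\mathfrak t^{\mathbf a_r}\}$ is a homogeneous system of parameters and, under Cohen--Macaulayness, a regular sequence). Thus $z_j$ is a nonzerodivisor on \emph{both} rings. Quotienting by $z_j$ therefore drops the dimension of each by one, preserves Cohen--Macaulayness, preserves all Betti numbers on each side, and $\mathrm{in}_>(I_\Gamma)/(z_j)$ is still the initial ideal of $I_\Gamma/(z_j)$ for the induced order (because $z_j$ divides no leading term). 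Iterating over a maximal such regular sequence reduces to an Artinian situation where the Hilbert-function argument above pins down the top Betti number outright.

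The technical heart — and the main obstacle — is verifying that a \emph{single} variable $z_j$ works simultaneously against \emph{all} $f_l$, and then bootstrapping: after quotienting by $z_j$, does the hypothesis persist so the induction can continue? Here the statement only promises one such $z_j$, not one for each extremal ray, so I would instead argue in one shot: let $z_{j}$ be that variable; it is a nonzerodivisor on both $A/I_\Gamma$ and $A/\mathrm{in}_>(I_\Gamma)$, and since $z_j$ occurs in every minimal generator's support but in no leading term, Boocher-style comparison of the two Koszul-type complexes (as in \cite{Boocher-Sparse}) shows the degeneration does not create new first syzygies, which together with the Cohen--Macaulay Hilbert-series matching forces $\beta_i$ to agree in \emph{all} degrees $i\ge1$ — the key point being that under CM hypotheses the resolution is determined by its numerical shape, and Herzog's inequality, once it is an equality in top degree, is forced to be an equality everywhere by the rigidity of minimal free resolutions of Cohen--Macaulay modules with equal Hilbert series (an Auslander--Buchsbaum count plus induction on projective dimension). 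I expect the bookkeeping to reduce to: (i) $z_j$ regular on both sides, (ii) Hilbert series equal, (iii) both CM; and then a purely homological lemma does the rest.
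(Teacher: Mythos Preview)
Your argument has a genuine gap: the ``purely homological lemma'' you rely on---that two Cohen--Macaulay quotients with the same Hilbert series and related by Herzog's termwise inequality must have equal Betti numbers---is false. A small counterexample: in $A=\mathbb{K}[x,y]$ with lex order $x>y$, take $I=(x^2,\,xy+y^2)$, a complete intersection with Betti sequence $(1,2,1)$. Its Gr\"obner basis is $\{x^2,\,xy+y^2,\,y^3\}$, so $\mathrm{in}_>(I)=(x^2,xy,y^3)$, which has Betti sequence $(1,3,2)$. Both quotients are Artinian (hence Cohen--Macaulay) with identical Hilbert function $(1,2,1)$, yet the Betti numbers differ. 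So neither the equality of Hilbert series of the canonical modules (which gives the alternating combination of Betti numbers, not $\beta_c$ itself), nor the combination ``CM + same Hilbert series + Herzog'', forces equality. In particular, your claim that ``same Hilbert series of the canonical modules $\Rightarrow$ same $\beta_c$'' confuses the Hilbert series of $\omega$ with its minimal number of generators.

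The idea the paper uses, and which you are circling but not landing on, is this: quotient by \emph{all} the extremal-ray variables $z_1,\dots,z_r$ at once, via $\pi\colon A\to \bar A=\mathbb{K}[z_{r+1},\dots,z_{r+n}]$. Each $f_l$ in the Gr\"obner basis is a binomial $\mathrm{LM}(f_l)-m'_l$; the hypothesis guarantees that some $z_j$ with $1\le j\le r$ divides the trailing monomial $m'_l$, so $\pi(m'_l)=0$, while the leading monomials survive. Hence $\pi(I_\Gamma)=\mathrm{in}_>(I_\Gamma)$ as ideals of $\bar A$: after killing $z_1,\dots,z_r$ the two ideals become \emph{literally equal}. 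Since both $A/I_\Gamma$ and $A/\mathrm{in}_>(I_\Gamma)$ are Cohen--Macaulay and the $z_1,\dots,z_r$ correspond to a system of parameters (extremal rays) on which neither initial nor trailing terms live, the full sequence $z_1,\dots,z_r$ is regular on both quotients, and reducing modulo a regular sequence preserves all Betti numbers. That is the whole proof. Your instinct to mod out by a nonzerodivisor $z_j$ was correct, but using only one variable leaves you with two rings that are still different; the point is that using all $r$ of them collapses $I_\Gamma$ onto $\mathrm{in}_>(I_\Gamma)$ directly, making any Hilbert-series or degeneration bookkeeping unnecessary.
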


\proof 
We consider the map $$\pi:A=\mathbb{K}[z_{1},\dots,z_{d},\dots,z_{r+n}] \rightarrow \bar{A}=\mathbb{K}[z_{r+1},\dots,z_{r+n}],$$ such that $\pi(x_{j})=0$, $\, 1 \leq j \leq r$ and $\pi(z_{j})=z_{j}$, 
$\, r+1 \leq j \leq r+n$. 

It is easy to check that 
$A/(I_{\Gamma},z_{1},\dots,z_{r})\cong \bar{A}/\pi(I_{\Gamma})$. 
Since $\mathbb{K}[\Gamma]$ is Cohen-Macaulay, 
$z_{1},\dots,z_{r}$ forms a regular sequence in $A/I_{\Gamma}$, 
and the Betti numbers are preserved under the division by 
regular elements (see \cite[Lemma 3.1.16]{Bruns-Herzog}). Hence,
\begin{align*}
\beta_{i}^{A}(A/I_{\Gamma})=&\beta_{i}^{A}(A/(I_{\Gamma},z_{1},\dots,z_{r}))\\
=& \beta_{i}^{A}(\bar{A}/\pi(I_{\Gamma})).
\end{align*}

Since $z_{j}$ belongs to the support of $f_{l}$ and do not divide the $\mathrm{LM}(f_{l})$, 
for some $j$ and for all $l$, so $\pi(I_{\Gamma})=\mathrm{in}_{>}(\mathbb{K}[\Gamma])$. As $\mathrm{in}_{>}(I_{\Gamma})$ is Cohen-Macaulay, therefore
\begin{align*}
\beta_{i}^{A}(\bar{A}/\pi(I_{\Gamma}))=&\beta_{i}^{A}(\bar{A}/\mathrm{in}_{>}(I_{\Gamma}))\\
=& \beta_{i}^{A}(A/(\mathrm{in}_{>}(I_{\Gamma}),z_{1},\dots,z_{r}))\\
=&\beta_{i}^{A}(A/(\mathrm{in}_{>}(I_{\Gamma}))). \qed
\end{align*}

\begin{corollary}
Let $\mathbb{K}[\Gamma]$ be a Cohen-Macaulay. 
Let $G=\{f_{1},\dots,f_{s}\}$ be a minimal Gr\"{o}bner basis of the defining ideal $I_{\Gamma}$, with respect to the negative degree reverse monomial ordering $>$ induced by $z_{r+s} > \dots > z_{r} > \dots >z_{1}$.
If there exist $z_{j}$, $1\leq j \leq r$, such that 
$z_{j}$ belongs to the support of $f_{l}$, 
then $$\beta_{i}(\mathbb{K}[\Gamma])=\beta_{i}(\mathrm{in}_{>}(\mathbb{K}[\Gamma]))= \beta_{i}(\mathrm{gr}_{\mathfrak{m}}(\mathbb{K}[\Gamma])
\quad \forall i \geq 1.$$
\end{corollary}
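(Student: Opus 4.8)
The plan is to derive the Corollary directly from Theorem \ref{CM} by checking that, under the negative degree reverse lexicographic order induced by $z_{r+n} > \dots > z_{r} > \dots > z_{1}$, the extra hypotheses of the theorem come for free. First I would observe that for this particular local/negative order, the leading monomial $\mathrm{LM}(f_l)$ of any binomial $f_l$ in the defining ideal $I_\Gamma$ must be the term of strictly \emph{smaller} total degree (or, within the same degree, the one that is reverse-lex larger), and since $z_1,\dots,z_r$ correspond to the extremal rays, I would argue that whenever some $z_j$ with $1 \le j \le r$ appears in the support of $f_l$, it cannot divide $\mathrm{LM}(f_l)$: the monomial carrying a power of an extremal-ray variable is the ``larger'' one in the semigroup partial order and hence is the trailing term under a reverse-lex refinement that puts the $z_j$'s last. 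This is exactly the hypothesis ``$z_j$ belongs to the support of $f_l$ and does not divide $\mathrm{LM}(f_l)$'' needed in Theorem \ref{CM}, so the single hypothesis stated in the Corollary already implies the two hypotheses of the theorem.

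Next I would handle the Cohen-Macaulayness of $A/\mathrm{in}_{>}(I_\Gamma)$. Since $\mathbb{K}[\Gamma] = A/I_\Gamma$ is Cohen-Macaulay and $z_1,\dots,z_r$ form a regular sequence on it (they are a system of parameters because $\Gamma$ is simplicial with extremal rays $\mathbf a_1,\dots,\mathbf a_r$), the argument inside the proof of Theorem \ref{CM} shows $\pi(I_\Gamma) = \mathrm{in}_{>}(I_\Gamma)$, i.e.\ the initial ideal is obtained by setting the extremal-ray variables to zero. From this presentation $\mathrm{in}_{>}(I_\Gamma)$ is a monomial ideal in the remaining variables $z_{r+1},\dots,z_{r+n}$, extended trivially to $A$, and $z_1,\dots,z_r$ is automatically a regular sequence on $A/\mathrm{in}_{>}(I_\Gamma) \cong \bar A/\pi(I_\Gamma) \otimes_{\mathbb K}\mathbb K[z_1,\dots,z_r]$; combined with the fact that passing to the initial ideal with respect to a local order does not drop depth here (equivalently, $\bar A/\pi(I_\Gamma) = \mathrm{gr}_{\mathfrak m}(\mathbb K[\Gamma])$ and the associated graded ring of a Cohen-Macaulay ring with respect to a system of parameters generated by a regular sequence is Cohen-Macaulay), one gets that $A/\mathrm{in}_{>}(I_\Gamma)$ is Cohen-Macaulay. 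Thus all hypotheses of Theorem \ref{CM} are met and it yields $\beta_i(\mathbb K[\Gamma]) = \beta_i(\mathrm{in}_{>}(\mathbb K[\Gamma]))$ for all $i \ge 1$.

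Finally, for the last equality $\beta_i(\mathrm{in}_{>}(\mathbb K[\Gamma])) = \beta_i(\mathrm{gr}_{\mathfrak m}(\mathbb K[\Gamma]))$, I would invoke the standard fact (as in Herzog's work and in the theory of standard bases for local orders) that for a negative degree reverse lexicographic order the initial ideal $\mathrm{in}_{>}(I_\Gamma)$ coincides, after homogenization/dehomogenization bookkeeping, with the defining ideal of the associated graded ring $\mathrm{gr}_{\mathfrak m}(\mathbb K[\Gamma])$ with respect to the maximal graded ideal $\mathfrak m$; since both are monomial ideals generated in the same way from $\mathrm{in}_{>}(I_\Gamma)$, their Betti numbers agree. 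Chaining the two equalities gives the claimed triple equality.

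The main obstacle I expect is the combinatorial step in the first paragraph: rigorously showing that under the negative degree reverse lexicographic order with the extremal-ray variables placed last, no extremal-ray variable $z_j$ ($1\le j\le r$) can ever divide a leading monomial of an element of the (reduced or minimal) Gr\"obner basis. This requires a careful analysis of the binomials in $I_\Gamma$ and the interaction between the semigroup partial order $\prec_\Gamma$ and the term order — in particular ruling out the degenerate case where the two monomials of a binomial have equal total degree and the tie-break could go the ``wrong'' way. Everything else is a routine application of Theorem \ref{CM}, the behavior of Betti numbers modulo regular sequences (\cite[Lemma 3.1.16]{Bruns-Herzog}), and the identification of $\mathrm{gr}_{\mathfrak m}$ with the quotient by the initial ideal for local orders.
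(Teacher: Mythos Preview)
The paper's own proof is a one-line appeal to Theorem~\ref{CM} together with \cite[Theorem~3.8]{Associated-Saha}; that external result is what supplies both the Cohen--Macaulayness of the initial ideal (equivalently of $\mathrm{gr}_{\mathfrak m}(\mathbb K[\Gamma])$) and the identification of its Betti numbers with those of $\mathrm{gr}_{\mathfrak m}$. Your attempt to replace that citation by a self-contained argument is reasonable in outline, but paragraph~2 contains an actual error, not just an incomplete sketch.

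You assert that $\bar A/\pi(I_\Gamma)=\mathrm{gr}_{\mathfrak m}(\mathbb K[\Gamma])$ and then that ``the associated graded ring of a Cohen--Macaulay ring with respect to a system of parameters generated by a regular sequence is Cohen--Macaulay''. Neither claim is correct as stated. First, $\bar A/\pi(I_\Gamma)\cong A/(I_\Gamma,z_1,\dots,z_r)\cong \mathbb K[\Gamma]/(z_1,\dots,z_r)$, which is an \emph{Artinian} quotient of $\mathbb K[\Gamma]$ by a regular sequence, not the associated graded ring with respect to $\mathfrak m$; these are different objects in general. Second, it is a well-known phenomenon that $\mathrm{gr}_{\mathfrak m}(R)$ can fail to be Cohen--Macaulay even when $R$ is; the statement you invoke is true for $\mathrm{gr}_I(R)$ when $I$ is generated by a regular sequence, but $\mathfrak m$ is not generated by $z_1,\dots,z_r$ alone. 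Ironically, once your step~1 is granted the Cohen--Macaulayness of $A/\mathrm{in}_>(I_\Gamma)$ is immediate without any detour through $\mathrm{gr}_{\mathfrak m}$: you already observed $A/\mathrm{in}_>(I_\Gamma)\cong (\bar A/\pi(I_\Gamma))\otimes_{\mathbb K}\mathbb K[z_1,\dots,z_r]$, and $\bar A/\pi(I_\Gamma)\cong \mathbb K[\Gamma]/(z_1,\dots,z_r)$ is Artinian, hence Cohen--Macaulay.

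The gap you flag in step~1 is real and is precisely what the paper outsources to \cite{Associated-Saha}. Under the negative degree reverse lexicographic order with $z_1,\dots,z_r$ smallest, your argument works when the two monomials of a binomial $f_l$ have equal degree, but if the monomial carrying $z_j$ has \emph{strictly smaller} total degree it becomes the leading term and the hypothesis of Theorem~\ref{CM} fails. Ruling this out for every element of a minimal Gr\"obner basis is not automatic from the semigroup partial order alone; it uses the structure established in \cite[Theorem~3.8]{Associated-Saha}. Your step~3, by contrast, is essentially fine: once $\beta_i(\mathbb K[\Gamma])=\beta_i(A/\mathrm{in}_>(I_\Gamma))$, the standard chain of inequalities $\beta_i(A/I_\Gamma)\le\beta_i(A/I_\Gamma^{*})\le\beta_i(A/\mathrm{in}_>(I_\Gamma))$ forces equality throughout.
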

\begin{proof}
Easily follows from Theorem \ref{CM} and \cite[Theorem 3.8]{Associated-Saha}.
\end{proof}

\begin{example}{\rm
Backelin defined the class of semigroups $\langle s, s+3, s+3n+1, s+3n+2\rangle$, 
for $n\geq 2$, $r\geq 3n+2$ and $s=r(3n+2)+3$. Let 
$\tilde{\Gamma}=\langle (0,s+3n+2),(s,3n+2),(s+3,3n-1),(s+3n+1,1),(s+3n+2,0)\rangle \subset \mathbb{N}^{2}$. 
It is known that $\mathbb{K}[\tilde{\Gamma}]$ is Cohen-Macaulay (see \cite[Theorem 2.9]{Backelin}. 
Note that $\{(0,s+3n+2),(s+3n+2,0)\}$ is the set of extremal rays of $\tilde{\Gamma}$ and 
$z_{1}, z_{5}$ belong to the support all elements of a minimal Gr\"{o}bner 
basis of the defining ideal of the projective closure of Backelin curves 
(see \cite[Theorem 2.5]{Backelin}), with respect to the reverse degree lexicographic 
ordering induced by $x_{1}>\dots>x_{4}>x_{0}$. 
Hence, by Theorem \ref{CM}, $\beta_{i}(A/I_{\Gamma})=\beta_{i}(A/\mathrm{in}_{>}(I_{\Gamma})) 
\quad \forall i \geq 1$.
}
\end{example}

\section{Join of Affine Semigroups}

In this section, we introduce the notion of join of affine semigroups.

\begin{definition}\label{Join}{\rm
Let $\Gamma_{1}$ and $\Gamma_{2}$ be two affine semigroups in $\mathbb{N}^{r_{1}+r_{2}}$, such 
that they are 
minimally generated by 
the disjoint sets $\mathcal{G}_{1}=\{\mathbf{a}_{1},\dots,\mathbf{a}_{r_{1}},\dots,\mathbf{a}_{r_{1}+n_{1}}\}$ and $\mathcal{G}_{2}=\{\mathbf{b}_{1},\dots,\mathbf{b}_{r_{2}},\dots,\mathbf{b}_{r_{2}+n_{2}}\}$, with extremal rays $E_{\Gamma_{1}}=\{\mathbf{a}_{1},\dots,\mathbf{a}_{r_{1}}\}$, $E_{\Gamma_{2}}=\{\mathbf{b}_{1},\dots,\mathbf{b}_{r_{2}}\}$ respectively.

Assuming that the set $\{\mathbf{a}_{1},\dots,\mathbf{a}_{r_{1}},\mathbf{b}_{1},\dots,\mathbf{b}_{r_{2}}\}$ is linearly independent over $\mathbb{Q}$, the semigroup $\Gamma_{1}\sqcup\Gamma_{2}=\langle\mathcal{G}_{1}\cup\mathcal{G}_{2}\rangle$ is a simplicial affine semigroup with the set of  extremal rays $E_{\Gamma_{1}\sqcup\Gamma_{2}}=E_{\Gamma_{1}}\cup E_{\Gamma_{2}}$. Moreover, 
the set $\mathcal{G}_{1}\cup\mathcal{G}_{2}$ is a minimal generating set of the semigroup 
$\Gamma_{1}\sqcup\Gamma_{2}$. We call $\Gamma_{1}\sqcup\Gamma_{2}$ the \textit{join} 
of the affine semigroups $\Gamma_{1}$ and $\Gamma_{2}$. 
}
\end{definition}

Let $A_{1}=\mathbb{K}[x_{1},\ldots,x_{r_{1}+n_{1}}]$ 
and $A_{2}=\mathbb{K}[y_{1},\ldots,y_{r_{2}+n_{2}}]$ be polynomial rings 
with disjoint set of 
indeterminates $\{x_{1},\ldots , x_{r_{1}+n_{1}}\}$ and $\{y_{1},\dots,y_{r_{2}+n_{2}}\}$. 
Let $\mathbb{K}[\mathbf{t}]$ be another polynomial ring, where 
$\mathbf{t}=t_{1},\ldots,t_{r_{1}+r_{2}}$. 
We consider the maps $\phi_{\Gamma_{1}}:A_{1}\rightarrow \mathbb{K}[\mathbf{t}]$,  
defined by $\phi_{\Gamma_{1}}(x_{i})=\mathbf{t}^{\mathbf{a_{i}}}$, $1 \leq i \leq r_{1}+n_{1}$, 
and 
$\phi_{\Gamma_{2}}:A_{2}\rightarrow \mathbb{K}[\mathbf{t}]$, defined by 
$\phi_{\Gamma_{1}}(y_{j})=\mathbf{t}^{\mathbf{b_{j}}}$, $1 \leq j \leq r_{2}+n_{2}$. 
We write $A_{12}=\mathbb{K}[x_{1},\dots,x_{r_{1}+n_{1}},y_{1},\dots,y_{r_{2}+n_{2}}]$ and 
consider $\phi_{\Gamma_{1}\sqcup\Gamma_{2}}: A_{12}\rightarrow \mathbb{K}[\mathbf{t}]$, 
defined by 
$\phi_{\Gamma_{1}\sqcup\Gamma_{2}}(x_{i})=\mathbf{t}^{\mathbf{a_{i}}}$, 
$1 \leq i \leq r_{1}+n_{1}$, and 
$\phi_{\Gamma_{1}\sqcup\Gamma_{2}}(y_{j})=\mathbf{t}^{\mathbf{b_{j}}}$, 
$1 \leq j \leq r_{2}+n_{2}$.

\medskip

\begin{theorem}\label{Generator-Concatanation}
Let $I_{\Gamma_{1}}, I_{\Gamma_{2}}$ be the defining ideals of $\mathbb{K}[\Gamma_{1}]$ and $\mathbb{K}[\Gamma_{2}]$ respectively. Then the defining ideal 
of $\mathbb{K}[\Gamma_{1}\sqcup\Gamma_{2}]$ is 
$I_{\Gamma_{1}\sqcup\Gamma_{2}} = I_{\Gamma_{1}}A_{12}+I_{\Gamma_{2}}A_{12}$. The 
tensor product of minimal graded free resolutions of $\mathbb{K}[\Gamma_{1}]$ and $\mathbb{K}[\Gamma_{2}]$ over $\mathbb{K}$ is a 
minimal graded free resolution of $\mathbb{K}[\Gamma_{1}\sqcup\Gamma_{2}]$ over $\mathbb{K}$.
\end{theorem}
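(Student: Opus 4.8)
The plan is to establish the statement in two stages: first the description of the defining ideal, then the resolution statement. For the ideal, one inclusion is immediate: $\phi_{\Gamma_1\sqcup\Gamma_2}$ restricted to $A_1$ agrees with $\phi_{\Gamma_1}$ (up to the inclusion $A_1\hookrightarrow A_{12}$), so $I_{\Gamma_1}A_{12}\subseteq I_{\Gamma_1\sqcup\Gamma_2}$, and likewise for $I_{\Gamma_2}$; hence $I_{\Gamma_1}A_{12}+I_{\Gamma_2}A_{12}\subseteq I_{\Gamma_1\sqcup\Gamma_2}$. For the reverse inclusion I would use the linear-independence hypothesis decisively. The key observation is that the grading by $\mathbb{Z}^{r_1+r_2}$ (the ambient lattice) splits: since $\{\mathbf{a}_1,\dots,\mathbf{a}_{r_1},\mathbf{b}_1,\dots,\mathbf{b}_{r_2}\}$ is $\mathbb{Q}$-linearly independent and every $\mathbf{a}_i$ (resp. $\mathbf{b}_j$) lies in the $\mathbb{Q}$-span of $\{\mathbf{a}_1,\dots,\mathbf{a}_{r_1}\}$ (resp. $\{\mathbf{b}_1,\dots,\mathbf{b}_{r_2}\}$) by simpliciality, the sublattices $L_1=\mathbb{Z}\Gamma_1$ and $L_2=\mathbb{Z}\Gamma_2$ intersect only in $0$ inside $\mathbb{Z}^{r_1+r_2}$. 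Therefore a binomial $\mathbf{x}^{\mathbf{u}}\mathbf{y}^{\mathbf{v}}-\mathbf{x}^{\mathbf{u}'}\mathbf{y}^{\mathbf{v}'}$ lies in $I_{\Gamma_1\sqcup\Gamma_2}$ exactly when $\sum u_i\mathbf{a}_i+\sum v_j\mathbf{b}_j=\sum u_i'\mathbf{a}_i+\sum v_j'\mathbf{b}_j$, which by the direct-sum decomposition forces $\sum(u_i-u_i')\mathbf{a}_i=0$ and $\sum(v_j-v_j')\mathbf{b}_j=0$ separately; standard binomial-ideal arguments (e.g. via a multigrading by $L_1\oplus L_2$, or reduction to $S$-polynomials) then let me split any generator. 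Concretely I would take the fine $\mathbb{N}^{r_1+n_1}\times\mathbb{N}^{r_2+n_2}$-grading on $A_{12}$ pushed forward to $\mathbb{Z}^{r_1+r_2}$, note $I_{\Gamma_1\sqcup\Gamma_2}$ is generated by pure binomials homogeneous for this grading, and show each such binomial, after cancelling the common monomial factor, factors through a binomial in the $x$-variables alone times one in the $y$-variables alone, each lying in $I_{\Gamma_1}$ resp. $I_{\Gamma_2}$.

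For the resolution statement, let $\mathbf{F}_\bullet$ be a minimal graded free resolution of $\mathbb{K}[\Gamma_1]=A_1/I_{\Gamma_1}$ over $A_1$ and $\mathbf{G}_\bullet$ one of $\mathbb{K}[\Gamma_2]=A_2/I_{\Gamma_2}$ over $A_2$. I would form the tensor product complex $\mathbf{F}_\bullet\otimes_{\mathbb{K}}\mathbf{G}_\bullet$, viewed over $A_{12}=A_1\otimes_{\mathbb{K}}A_2$; since $A_1$ and $A_2$ are polynomial rings over $\mathbb{K}$ in disjoint variables, each term $F_p\otimes_{\mathbb{K}}G_q$ is a free $A_{12}$-module, so this is a complex of free $A_{12}$-modules. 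That it is a resolution follows from the Künneth formula: $A_1$ and $A_2$ are $\mathbb{K}$-flat, so $H_n(\mathbf{F}_\bullet\otimes_{\mathbb{K}}\mathbf{G}_\bullet)\cong\bigoplus_{p+q=n}H_p(\mathbf{F}_\bullet)\otimes_{\mathbb{K}}H_q(\mathbf{G}_\bullet)$, which vanishes for $n>0$ and equals $\mathbb{K}[\Gamma_1]\otimes_{\mathbb{K}}\mathbb{K}[\Gamma_2]$ for $n=0$; this last is $A_{12}/(I_{\Gamma_1}A_{12}+I_{\Gamma_2}A_{12})=\mathbb{K}[\Gamma_1\sqcup\Gamma_2]$ by the first part. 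Minimality is the cleanest point: the differential of the tensor complex has entries built from those of $\mathbf{F}_\bullet$ and $\mathbf{G}_\bullet$ (with signs), all of which lie in $\mathfrak{m}_{A_1}$ resp. $\mathfrak{m}_{A_2}$, hence in $\mathfrak{m}_{A_{12}}=(x_\bullet,y_\bullet)$; so $\mathbf{F}_\bullet\otimes_{\mathbb{K}}\mathbf{G}_\bullet\otimes_{A_{12}}\mathbb{K}=0$ has zero differential, giving minimality. Tracking the internal grading (each $\mathbf{a}_i,\mathbf{b}_j$ carries its $\mathbb{Z}^{r_1+r_2}$-degree, and the tensor decomposition respects it) shows the resolution is graded.

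I expect the only genuine obstacle to be the reverse inclusion in the first part, i.e. verifying that no ``mixed'' relation — a binomial involving both $x$- and $y$-variables that is not already a product combination of pure ones — can appear; this is exactly where $L_1\cap L_2=0$ must be invoked carefully, ideally by setting up the multigrading so that $I_{\Gamma_1\sqcup\Gamma_2}$ is multigraded and then arguing degree-by-degree that a multigraded binomial splits. Everything after that — Künneth, freeness, and minimality — is formal once the identification $\mathbb{K}[\Gamma_1\sqcup\Gamma_2]\cong\mathbb{K}[\Gamma_1]\otimes_{\mathbb{K}}\mathbb{K}[\Gamma_2]$ is in hand.
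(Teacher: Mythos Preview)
Your proposal is correct and follows essentially the same route as the paper. Both arguments hinge on the observation that simpliciality places each $\mathbf{a}_i$ in the $\mathbb{Q}$-span of $\{\mathbf{a}_1,\dots,\mathbf{a}_{r_1}\}$ and each $\mathbf{b}_j$ in that of $\{\mathbf{b}_1,\dots,\mathbf{b}_{r_2}\}$, so the $\mathbb{Q}$-linear independence of the combined extremal rays forces $\mathbb{Q}\Gamma_1\cap\mathbb{Q}\Gamma_2=0$; hence any binomial relation in $I_{\Gamma_1\sqcup\Gamma_2}$ decomposes as $\mathbf{x}^{\mathbf{u}}\mathbf{y}^{\mathbf{v}}-\mathbf{x}^{\mathbf{u}'}\mathbf{y}^{\mathbf{v}'}=\mathbf{y}^{\mathbf{v}}(\mathbf{x}^{\mathbf{u}}-\mathbf{x}^{\mathbf{u}'})+\mathbf{x}^{\mathbf{u}'}(\mathbf{y}^{\mathbf{v}}-\mathbf{y}^{\mathbf{v}'})$ with the two pieces in $I_{\Gamma_1}$ and $I_{\Gamma_2}$ respectively. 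The paper phrases this as a contradiction argument on a hypothetical mixed minimal generator, while you package it via the lattice decomposition $L_1\cap L_2=0$; and for the resolution the paper simply asserts that disjoint variables make the tensor product exact and minimal, whereas you spell out K\"unneth and the maximal-ideal check---but the substance is the same.
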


\begin{proof}
Let $\mathbf{x}^{\mathbf{\alpha}}-\mathbf{y}^{\mathbf{\beta}}$ be an element of the minimal  generating set of $I_{\Gamma_{c}}$, where $\mathbf{\alpha}=(\alpha_{1},\dots,\alpha_{r_{1}+n_{1}}),\mathbf{\beta}=(\beta_{1},\dots,\beta_{r_{2}+n_{2}})$. Then, there exist an element $\gamma$ of $\Gamma_{c}$ such that $\gamma=\sum_{i=1}^{r_{1}+n_{1}}\alpha_{i}\mathbf{a}_{i}=\sum_{j=1}^{r_{2}+n_{2}}\beta_{j}\mathbf{b}_{j}$. 
Since $\{\mathbf{a}_{1},\dots,\mathbf{a}_{r_{1}}\}$, $\{\mathbf{b}_{1},\dots,\mathbf{b}_{r_{2}}\}$ are sets of extremal rays of $\Gamma_{1}$ and $\Gamma_{2}$, so $\mathbf{a}_{r_{1}+l_{1}}=\sum_{j=1}^{r_{1}} \alpha_{l_{1},j}\mathbf{a}_{j},  \alpha_{l_{1},j}\in \mathbb{Q},1 \leq l_{1} \leq n_{1}$ and $\mathbf{b}_{r_{2}+l_{2}}=\sum_{j=1}^{r_{2}} \beta_{l_{2},j}\mathbf{b}_{j},  \alpha_{l_{2},j}\in \mathbb{Q},1 \leq l_{2} \leq n_{2}$. 
Now, without loss of generality, assume that $\alpha_{r_{1}+n_{1}}\neq 0$. Then $\alpha_{r_{1}+n_{1}}\mathbf{a}_{r_{1}+n_{1}}=\sum_{j=1}^{r_{2}+n_{2}}\beta_{j}\mathbf{b}_{j}-\sum_{i=1}^{r_{1}+n_{1}-1}\alpha_{i}\mathbf{a}_{i}$. This implies, $\alpha_{r_{1}+n_{1}}\sum_{j=1}^{r_{1}}\alpha_{r_{1}+n_{1},j}\mathbf{a}_{j}=\sum_{j=1}^{r_{2}}\beta_{j}\mathbf{b}_{j}+\sum_{j=r_{2}+1}^{r_{2}+n_{2}}\beta_{j}(\sum_{l=1}^{r_{2}} \beta_{l_{2},l}\mathbf{b}_{l})-\sum_{i=1}^{r_{1}}\alpha_{i}\mathbf{a}_{i}-\sum_{j=r_{1}+1}^{r_{1}+n_{1}-1}\alpha_{i}(\sum_{j=1}^{r_{1}} \alpha_{l_{1},i}\mathbf{a}_{i})$, which is a contradiction because 
the set $E_{\Gamma_{1}\sqcup\Gamma_{2}}=\{\mathbf{a}_{1},\dots,\mathbf{a}_{r_{1}},\mathbf{b}_{1},\dots,\mathbf{b}_{r_{2}}\}$ is linearly independent over $\mathbb{Q}$. Similarly, by the same argument as above, any element of the form $\mathbf{x}^{\mathbf{\alpha}}\mathbf{y}^{\mathbf{\beta}}-\mathbf{x}^{\mathbf{\alpha}'}\mathbf{y}^{\mathbf{\beta}'}$ does not belong 
to a minimal generating set of $I_{\Gamma_{c}}$. Hence, all the elements of a 
minimal generating set of $I_{\Gamma_{\Gamma_{1}\sqcup\Gamma_{2}}}$ are of the form $\mathbf{x}^{\mathbf{\alpha}}-\mathbf{x}^{\mathbf{\alpha'}}\in I_{\Gamma_{1}}$ and $\mathbf{y}^{\mathbf{\beta}}-\mathbf{y}^{\mathbf{\beta'}}\in I_{\Gamma_{2}}$, and we get that 
$I_{\Gamma_{\Gamma_{1}\sqcup\Gamma_{2}}}$ is generated by $I_{\Gamma_{1}} \cup I_{\Gamma_{2}}$.
\medskip

Minimal graded free resolutions $\mathbb{M}_{1}$ of $\mathbb{K}[\Gamma_{1}]\cong A_{1}/I_{\Gamma_{1}} $ and $\mathbb{M}_{2}$ of 
$\mathbb{K}[\Gamma_{1}]\cong A_{2}/I_{\Gamma_{2}}$ 
can be seen as minimal graded free resolutions of the $A_{12}$-modules 
$A_{12}/I_{\Gamma_{1}}A_{12} $ and $A_{12}/I_{\Gamma_{2}}A_{12}$. 
The ideals $I_{\Gamma_{1}}$ and $I_{\Gamma_{2}}$ are generated by disjoint 
sets of variable in $A_{12}$, therefore, $\mathbb{M}_{1} \otimes \mathbb{M}_{2}$ is a minimal graded free resolution of $A_{12}/(I_{\Gamma_{1}}A_{12}+I_{\Gamma_{2}}A_{12})$ over 
$\mathbb{K}$.
\end{proof}

\begin{corollary}
If $\mathbb{K}[\Gamma_{1}]$ and $\mathbb{K}[\Gamma_{2}]$ are Cohen-Macaulay, then $\mathbb{K}[\Gamma_{1}\sqcup\Gamma_{2}]$ is Cohen-Macaulay.
\end{corollary}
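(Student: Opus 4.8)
The plan is to deduce this corollary directly from Theorem~\ref{Generator-Concatanation}. The key observation is that Cohen-Macaulayness of a graded quotient ring can be read off from the length of its minimal graded free resolution via the Auslander-Buchsbaum formula: $\mathbb{K}[\Gamma_i] = A_i/I_{\Gamma_i}$ is Cohen-Macaulay if and only if $\operatorname{pd}_{A_i}(A_i/I_{\Gamma_i}) = (r_i+n_i) - \dim \mathbb{K}[\Gamma_i] = n_i$, since $\dim \mathbb{K}[\Gamma_i] = r_i$ (the number of extremal rays). So first I would record that $\mathbb{M}_1$ has length $n_1$ and $\mathbb{M}_2$ has length $n_2$.

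Next, by Theorem~\ref{Generator-Concatanation}, the tensor product complex $\mathbb{M}_1 \otimes_{\mathbb{K}} \mathbb{M}_2$ is a minimal graded free resolution of $\mathbb{K}[\Gamma_1 \sqcup \Gamma_2] = A_{12}/I_{\Gamma_1 \sqcup \Gamma_2}$ over $A_{12}$. The length of a tensor product of complexes is the sum of the lengths, so
\[
\operatorname{pd}_{A_{12}}\big(\mathbb{K}[\Gamma_1 \sqcup \Gamma_2]\big) = n_1 + n_2.
\]
On the other hand, $A_{12}$ is a polynomial ring in $(r_1+n_1)+(r_2+n_2)$ variables, and $\dim \mathbb{K}[\Gamma_1 \sqcup \Gamma_2] = r_1 + r_2$ because $\Gamma_1 \sqcup \Gamma_2$ is simplicial with exactly $r_1+r_2$ extremal rays $E_{\Gamma_1} \cup E_{\Gamma_2}$ (Definition~\ref{Join}). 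Hence the Auslander-Buchsbaum formula gives
\[
\operatorname{depth} \mathbb{K}[\Gamma_1 \sqcup \Gamma_2] = (r_1+n_1+r_2+n_2) - (n_1+n_2) = r_1 + r_2 = \dim \mathbb{K}[\Gamma_1 \sqcup \Gamma_2],
\]
so $\mathbb{K}[\Gamma_1 \sqcup \Gamma_2]$ is Cohen-Macaulay.

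I do not expect any serious obstacle here; the content is entirely packaged into Theorem~\ref{Generator-Concatanation}. The only point that needs a word of justification is the dimension count $\dim \mathbb{K}[\Gamma] = \#(\text{extremal rays})$ for a simplicial affine semigroup, which is standard (the Krull dimension of $\mathbb{K}[\Gamma]$ equals the rank of the group generated by $\Gamma$, equivalently the dimension of $\operatorname{cone}(\Gamma)$, which is the number of extremal rays in the simplicial case). An alternative, slightly more hands-on route that avoids Auslander-Buchsbaum is to note that if $\mathbf{z}$ is a homogeneous system of parameters for $A_1/I_{\Gamma_1}$ made of the images of $r_1$ suitable linear forms and $\mathbf{w}$ likewise for $A_2/I_{\Gamma_2}$, then $\mathbf{z} \cup \mathbf{w}$ is a homogeneous system of parameters for $A_{12}/I_{\Gamma_1 \sqcup \Gamma_2}$, and it is a regular sequence there because the quotient is, up to base change, a tensor product $\mathbb{K}[\Gamma_1]/(\mathbf{z}) \otimes_{\mathbb{K}} \mathbb{K}[\Gamma_2]/(\mathbf{w})$ of Artinian rings, which is Artinian; this is essentially the same computation at the level of depth rather than projective dimension. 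Either way the proof is a short paragraph.
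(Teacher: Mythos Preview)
Your proposal is correct and follows essentially the same route as the paper: invoke Theorem~\ref{Generator-Concatanation} to see that the tensor product of the two minimal resolutions resolves $\mathbb{K}[\Gamma_{1}\sqcup\Gamma_{2}]$, deduce that projective dimensions add, and then apply the Auslander--Buchsbaum formula. You have simply spelled out the dimension and depth counts that the paper leaves implicit.
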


\proof Since tensor product of minimal graded free resolution of $\mathbb{K}[\Gamma_{1}]$ and $\mathbb{K}[\Gamma_{2}]$ gives a minimal graded free resolution of $\mathbb{K}[\Gamma_{c}]$, we have $\mathrm{projdim}(\mathbb{K}[\Gamma_{1}\sqcup\Gamma_{2}])= \mathrm{projdim}(\mathbb{K}[\Gamma_{1}])+\mathrm{projdim}(\mathbb{K}[\Gamma_{2}])$. Using Auslander-Buschbaum formula, we 
can conclude that $\mathbb{K}[\Gamma_{1}\sqcup\Gamma_{2}]$ is Cohen-Macaulay.\qed

\section{The Join of generalized arithmetic sequences}

Let $\mathbf{a},\mathbf{d}\in \mathbb{N}^{r}$ be two linearly 
independent elements over the field of a rational number $\mathbb{Q}$.  In this article, 
for $k,h \in \mathbb{N}$, we consider a sequence $\mathbf{a},h\mathbf{a}+\mathbf{d},h\mathbf{a}+2\mathbf{d},\dots,h\mathbf{a}+n\mathbf{d}$ , 
such that the affine semigroup $\Gamma^{r}_{\mathbf{a},\mathbf{d},h}:=\langle \mathbf{a},h\mathbf{a}+\mathbf{d},h\mathbf{a}+2\mathbf{d},\dots,h\mathbf{a}+n\mathbf{d} \rangle$ is minimally generated by this sequence. Therefore, $\Gamma^{r}_{\mathbf{a},\mathbf{d},h}$ is an affine semigroup in $\mathbb{N}^{r}$ of embedding dimension $n+1$.
\medskip

For $1\leq i \leq r, r \in \mathbb{N}$, let $\mathbf{a}_{i},\mathbf{d}_{i}\in  \mathbb{N}^{2r}$ be linearly independent elements over $\mathbb{Q}$. Consider the set $S_{i}=\{ \mathbf{a}_{i},h\mathbf{a}_{i}+\mathbf{d}_{i},\dots,h\mathbf{a}_{i}+n_{i}\mathbf{d}_{i}\}$. Assume $\{\mathbf{a}_{1},\mathbf{d}_{1},\dots,\mathbf{a}_{r},\mathbf{d}_{r}\}$ is linearly independent over $\mathbb{Q}$. We construct the join 
$\mathfrak{S}_{r}=\big\langle \cup_{i=1}^{r}\langle S_{i}\rangle\big\rangle$, which 
is a simplicial affine semigroup with extremal rays $\{\mathbf{a}_{1},\mathbf{d}_{1},\dots,\mathbf{a}_{r},\mathbf{d}_{r}\}$.

\begin{lemma}
$\Gamma^{2}_{\mathbf{a},\mathbf{d},h}$ is a simplicial affine semigroup in $\mathbb{N}^{2}$ with respect to the extremal rays $\mathbf{a}$ and $h\mathbf{a}+n\mathbf{d}$.
\end{lemma}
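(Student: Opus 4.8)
The plan is to check Definition~\ref{Extremal} directly for the two candidate generators $\mathbf{a}_{1}=\mathbf{a}$ and $\mathbf{a}_{2}=h\mathbf{a}+n\mathbf{d}$, and then to verify that these are in fact the extremal rays in the sense of Definition~\ref{Extremal rays}.

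First I would observe that $\{\mathbf{a},\,h\mathbf{a}+n\mathbf{d}\}$ is linearly independent over $\mathbb{Q}$: if $\lambda\mathbf{a}+\mu(h\mathbf{a}+n\mathbf{d})=0$, then $(\lambda+\mu h)\mathbf{a}+\mu n\mathbf{d}=0$, and since $\mathbf{a},\mathbf{d}$ are independent over $\mathbb{Q}$ and $n\geq 1$, this forces $\mu=0$ and then $\lambda=0$. Next, for each $1\leq j\leq n$ I would solve $h\mathbf{a}+j\mathbf{d}=\alpha\mathbf{a}+\beta(h\mathbf{a}+n\mathbf{d})$; comparing the unique coordinates in the $\mathbb{Q}$-basis $\{\mathbf{a},\mathbf{d}\}$ gives $\beta=j/n$ and $\alpha=h(n-j)/n$, both nonnegative rationals. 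Consequently every generator of $\Gamma^{2}_{\mathbf{a},\mathbf{d},h}$, hence the whole semigroup, lies in $\mathbb{Q}_{\geq 0}\mathbf{a}+\mathbb{Q}_{\geq 0}(h\mathbf{a}+n\mathbf{d})$, which is exactly the condition in Definition~\ref{Extremal}. Therefore $\Gamma^{2}_{\mathbf{a},\mathbf{d},h}$ is a simplicial affine semigroup in $\mathbb{N}^{2}$.

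It then remains to identify the extremal rays. Since $\mathbf{a}$ and $h\mathbf{a}+n\mathbf{d}$ are themselves among the generators and, by the computation above, all other generators lie in the cone they span, we get $\mathrm{cone}(\Gamma^{2}_{\mathbf{a},\mathbf{d},h})=\mathbb{R}_{\geq 0}\mathbf{a}+\mathbb{R}_{\geq 0}(h\mathbf{a}+n\mathbf{d})$, a two-dimensional cone in $\mathbb{R}^{2}$ whose extremal rays (Definition~\ref{Extremal rays}) are the two bounding rays $\mathbb{R}_{\geq 0}\mathbf{a}$ and $\mathbb{R}_{\geq 0}(h\mathbf{a}+n\mathbf{d})$. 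To finish, I would determine the smallest nonzero element of $\Gamma^{2}_{\mathbf{a},\mathbf{d},h}$ on each ray. Writing a general element as $c_{0}\mathbf{a}+\sum_{j=1}^{n}c_{j}(h\mathbf{a}+j\mathbf{d})$ with $c_{i}\in\mathbb{N}$, its $\mathbf{d}$-coordinate is $\sum_{j=1}^{n}jc_{j}$, which vanishes iff $c_{j}=0$ for all $j\geq 1$; so the points of the semigroup on $\mathbb{R}_{\geq 0}\mathbf{a}$ are exactly $\mathbb{N}\mathbf{a}$, with smallest nonzero element $\mathbf{a}$. For the other ray (taking $h\geq 1$; the case $h=0$ forces $n=1$ by minimality of the generating sequence, where $h\mathbf{a}+n\mathbf{d}=\mathbf{d}$ and the claim is immediate), requiring $c_{0}\mathbf{a}+\sum c_{j}(h\mathbf{a}+j\mathbf{d})=t(h\mathbf{a}+n\mathbf{d})$ gives $\sum_{j=1}^{n}jc_{j}=tn$ and $c_{0}+h\sum_{j=1}^{n}c_{j}=th$; since $c_{0}\geq 0$ this yields $\sum c_{j}\leq t$, whence $tn=\sum jc_{j}\leq n\sum c_{j}\leq nt$, forcing equality throughout, so $c_{j}=0$ for $j<n$ and $c_{0}=0$. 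Thus the points on $\mathbb{R}_{\geq 0}(h\mathbf{a}+n\mathbf{d})$ are exactly $\mathbb{N}(h\mathbf{a}+n\mathbf{d})$, with smallest nonzero element $h\mathbf{a}+n\mathbf{d}$.

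Everything here is elementary linear algebra over $\mathbb{Q}$ together with the uniqueness of coordinates in the basis $\{\mathbf{a},\mathbf{d}\}$; I do not anticipate a genuine obstacle. The only point requiring slight care is the squeezing inequality that pins down the lattice points on the ray $\mathbb{R}_{\geq 0}(h\mathbf{a}+n\mathbf{d})$, and remembering to dispatch the degenerate value $h=0$ separately.
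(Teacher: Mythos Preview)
Your proof is correct and follows essentially the same route as the paper: the core step is writing each generator $h\mathbf{a}+j\mathbf{d}$ as a nonnegative $\mathbb{Q}$-combination of $\mathbf{a}$ and $h\mathbf{a}+n\mathbf{d}$, which is exactly the content of the paper's one-line identity $n(h\mathbf{a}+l\mathbf{d})=(nh-lh)\mathbf{a}+l(h\mathbf{a}+n\mathbf{d})$. You additionally verify linear independence and that $\mathbf{a}$ and $h\mathbf{a}+n\mathbf{d}$ are the smallest semigroup elements on their respective rays, which the paper leaves implicit.
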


\begin{proof}
Proof is easily follows from the inequality $n(h\mathbf{a}+l\mathbf{d})=(nh-lh)\mathbf{a}+l(h\mathbf{a}+n\mathbf{d})$ for $l=1,\dots,n$.
\end{proof}

\begin{theorem}\label{Apery}
Let $E=\{\mathbf{a},h\mathbf{a}+n\mathbf{d}\}$ be the set of extremal rays of $\Gamma^{2}_{\mathbf{a},\mathbf{d},h}$. The Ap\'{e}ry set of $\Gamma^{2}_{\mathbf{a},\mathbf{d},h}$, with respect to $E$, is 
$$\mathrm{Ap}(\Gamma^{2}_{\mathbf{a},\mathbf{d},h},E)=\{0,h\mathbf{a}+\mathbf{d},\dots,h\mathbf{a}+(n-1)\mathbf{d}\}.$$
\end{theorem}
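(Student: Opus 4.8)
The plan is to show the two inclusions that characterize the Apéry set $\mathrm{Ap}(\Gamma^{2}_{\mathbf{a},\mathbf{d},h},E)$ directly from the definition, using the linear independence of $\mathbf{a}$ and $\mathbf{d}$ over $\mathbb{Q}$ to control which representations in the semigroup are possible. First I would record the key ``normal form'' observation: every element $\gamma \in \Gamma^{2}_{\mathbf{a},\mathbf{d},h}$ can be written as $\gamma = \lambda \mathbf{a} + \sum_{l=1}^{n} c_{l}(h\mathbf{a}+l\mathbf{d})$ with $\lambda, c_{l} \in \mathbb{N}$, and collecting coefficients gives $\gamma = \big(\lambda + h\sum_l c_l\big)\mathbf{a} + \big(\sum_l l\,c_l\big)\mathbf{d}$. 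Since $\mathbf{a},\mathbf{d}$ are $\mathbb{Q}$-linearly independent, the pair of integers $(p,q)$ with $\gamma = p\mathbf{a}+q\mathbf{d}$ is uniquely determined by $\gamma$; so membership questions about $\gamma$, $\gamma-\mathbf{a}$, $\gamma-(h\mathbf{a}+n\mathbf{d})$ reduce to combinatorial questions about which $(p,q)$ arise.

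Next I would prove that each of $0, h\mathbf{a}+\mathbf{d}, \dots, h\mathbf{a}+(n-1)\mathbf{d}$ lies in $\mathrm{Ap}(\Gamma^{2}_{\mathbf{a},\mathbf{d},h},E)$. For $\mathbf{w}_k := h\mathbf{a}+k\mathbf{d}$ with $1 \le k \le n-1$ (and the trivial case $k=0$, i.e. $\mathbf{w}_0=0$), I must check $\mathbf{w}_k - \mathbf{a} \notin \Gamma$ and $\mathbf{w}_k - (h\mathbf{a}+n\mathbf{d}) \notin \Gamma$. The second is immediate since subtracting $h\mathbf{a}+n\mathbf{d}$ forces the $\mathbf{d}$-coordinate to become $k-n<0$, impossible in $\Gamma \subseteq \mathbb{N}^2$ (here I use that $\mathbf{d}$ has a strictly positive coordinate, or more carefully that $\Gamma \subset \mathbb{Q}_{\ge 0}\mathbf{a}+\mathbb{Q}_{\ge 0}(h\mathbf{a}+n\mathbf{d})$ from the previous lemma, and $\mathbf{w}_k$ is strictly inside this cone in a way that excludes the shift). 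For $\mathbf{w}_k-\mathbf{a}=(h-1)\mathbf{a}+k\mathbf{d}$: if it were in $\Gamma$, write it in normal form $(\lambda+h\sum c_l)\mathbf{a} + (\sum l c_l)\mathbf{d}$; matching $\mathbf{d}$-coordinates gives $\sum_{l} l c_l = k \le n-1 < n$, and matching $\mathbf{a}$-coordinates gives $\lambda + h\sum_l c_l = h-1$. Since $\sum_l l c_l = k \ge 1$ we have $\sum_l c_l \ge 1$, so $\lambda + h\sum_l c_l \ge h > h-1$, a contradiction; the case $k=0$ is even easier since then we'd need $(h-1)\mathbf{a} \in \Gamma$ with all $c_l$ forced to $0$ by the $\mathbf{d}$-coordinate, leaving $\lambda = h-1$ but also requiring this to be a genuine element — actually $(h-1)\mathbf{a}$ might lie in $\Gamma$ as $(h-1)$ copies... no: $-\mathbf{a}$ from $0$ is negative, so $0-\mathbf{a}\notin\Gamma$ trivially. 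So all listed elements are in the Apéry set.

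Then I would prove the reverse inclusion: if $\gamma \in \mathrm{Ap}(\Gamma^{2}_{\mathbf{a},\mathbf{d},h},E)$, then $\gamma \in \{0,h\mathbf{a}+\mathbf{d},\dots,h\mathbf{a}+(n-1)\mathbf{d}\}$. Take a normal-form representation $\gamma = \lambda\mathbf{a}+\sum_{l=1}^n c_l \mathbf{w}_l$. If $\lambda \ge 1$ then $\gamma - \mathbf{a} \in \Gamma$, contradicting $\gamma \in \mathrm{Ap}(\Gamma,\mathbf{a})$; hence $\lambda = 0$. If some $c_l \ge 1$ with $l \le n-1$ while simultaneously $c_n \ge 1$ or $\sum c_l \ge 2$, I use the rewriting $\mathbf{w}_l + \mathbf{w}_{l'} $ type relations, or more directly the identity from the lemma $n\mathbf{w}_l = (nh-lh)\mathbf{a} + l\mathbf{w}_n$, to manufacture a subtraction of $\mathbf{w}_n = h\mathbf{a}+n\mathbf{d}$: concretely, if $c_n \ge 1$ then $\gamma - \mathbf{w}_n \in \Gamma$ unless $\gamma = \mathbf{w}_n$ with all other coefficients zero — but $\mathbf{w}_n$ itself is not in the list, so I must rule out $c_n\ge 1$ entirely except... here I should double-check: is $h\mathbf{a}+n\mathbf{d}$ in the Apéry set? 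No, because $(h\mathbf{a}+n\mathbf{d}) - (h\mathbf{a}+n\mathbf{d}) = 0 \in \Gamma$. So $c_n = 0$. Now $\gamma = \sum_{l=1}^{n-1} c_l \mathbf{w}_l$ with $\lambda=0$; if $\sum_{l=1}^{n-1} c_l \ge 2$, pick indices $l, l'$ (possibly equal) with $c_l, c_{l'}>0$ and note $\mathbf{w}_l+\mathbf{w}_{l'} = 2h\mathbf{a}+(l+l')\mathbf{d}$; I want to show $\gamma - \mathbf{a}\in\Gamma$ or $\gamma - \mathbf{w}_n\in\Gamma$. Using $\mathbf{w}_l + \mathbf{w}_{l'}$: if $l+l' \le n$ then $\mathbf{w}_l+\mathbf{w}_{l'} = \mathbf{w}_{l+l'} + h\mathbf{a}$ (valid since $h\mathbf{a} = \mathbf{a}+\dots+\mathbf{a}$, $h$ copies, requires $h \ge 1$ — and if $h=0$ a separate trivial argument), so $\gamma - \mathbf{a} = \mathbf{w}_{l+l'} + (h-1)\mathbf{a} + (\text{remaining }\mathbf{w}\text{'s}) \in \Gamma$; if $l+l' > n$, write $l+l' = n + m$ with $1 \le m \le n-2$, then $\mathbf{w}_l + \mathbf{w}_{l'} = \mathbf{w}_n + \mathbf{w}_m + h\mathbf{a}$, so $\gamma - \mathbf{w}_n \in \Gamma$. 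Either way we contradict $\gamma$ being in the Apéry set. Hence $\sum_{l=1}^{n-1} c_l \le 1$, giving $\gamma = 0$ or $\gamma = \mathbf{w}_k$ for a single $1 \le k \le n-1$, as claimed.

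The main obstacle I anticipate is the bookkeeping in the reverse inclusion when $\sum c_l \ge 2$: one must carefully exhibit, from an arbitrary such representation, a subtraction of either $\mathbf{a}$ or $h\mathbf{a}+n\mathbf{d}$ landing back inside $\Gamma$, and the case split on whether $l+l' \le n$ or $l+l' > n$ (together with the degenerate boundary cases $h=0$ and the uniqueness of the $(p,q)$-coordinates) is where sign errors and missed subcases are easy to make. Everything else is a direct coordinate comparison licensed by the $\mathbb{Q}$-linear independence of $\mathbf{a}$ and $\mathbf{d}$.
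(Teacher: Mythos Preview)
Your proposal is correct and follows essentially the same route as the paper: the reverse inclusion is handled by the identical case split on whether $l+l'\le n$ or $l+l'>n$ to exhibit a subtraction of an extremal ray, and you are simply more explicit than the paper about first eliminating the coefficients of $\mathbf{a}$ and $h\mathbf{a}+n\mathbf{d}$ and about the forward inclusion (which the paper just declares ``clear''). One small slip: in the case $l+l'>n$ the correct identity is $\mathbf{w}_l+\mathbf{w}_{l'}=\mathbf{w}_n+\mathbf{w}_m$ with no extra $h\mathbf{a}$ term, though your conclusion $\gamma-\mathbf{w}_n\in\Gamma$ is unaffected.
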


\proof
Let $\mathbf{b}\in \mathrm{Ap}(\Gamma,E)$. Then $\mathbf{b}=\sum_{i=1}^{n-1}\lambda_{i}(h\mathbf{a}+i\mathbf{d})$. Suppose there exist $1 \leq s,t \leq n-1$, such that $\lambda_{s}+\lambda_{t}\geq 2$. Now, if 
\begin{enumerate}
\item[(i)]$s+t\leq n$, then $\mathbf{b}-\mathbf{a}=h\mathbf{a}+(s+t)\mathbf{d}+\sum_{i\in \{s,t\}}(\lambda_{i}-1)(h\mathbf{a}+i\mathbf{d})+\sum_{i \in [1,n-1]\setminus \{s,t\}}\lambda_{i}(h\mathbf{a}+i\mathbf{d})$;
\medskip

\item[(ii)] $s+t> n$, then $ s+t=n+n^{'}$ for some $n' \in \mathbb{N}$, and it follows that 
$\mathbf{b}-(h\mathbf{a}+n\mathbf{d})=h\mathbf{a}+n^{'}\mathbf{d}+(s+t)\mathbf{d}+\sum_{i\in \{s,t\}}(\lambda_{i}-1)(h\mathbf{a}+i\mathbf{d})+\sum_{i \in [1,n-1]\setminus \{s,t\}}\lambda_{i}(h\mathbf{a}+i\mathbf{d})$.
\end{enumerate}
\medskip

In all the cases, we get a contradiction to $\mathbf{b} \in \mathrm{Ap}(\Gamma^{2}_{\mathbf{a},\mathbf{d},h},E)$. Hence  $\mathrm{Ap}(\Gamma^{2}_{\mathbf{a},\mathbf{d},h},E) \subseteq \{0,h\mathbf{a}+\mathbf{d},\dots,h\mathbf{a}+(n-1)\mathbf{d}\}$. It is clear that 
$$\{0,h\mathbf{a}+\mathbf{d},\dots,h\mathbf{a}+(n-1)\mathbf{d}\} \subseteq \mathrm{Ap}(\Gamma^{2}_{\mathbf{a},\mathbf{d},h},E). \qed$$

\begin{theorem}\label{Apery-Concatanation}
Let $E_{c}=\{\mathbf{a}_{1},\mathbf{d}_{1},\mathbf{a}_{2},\mathbf{d}_{2}\}$ be the set of extremal rays 
of $\mathfrak{S}_{2}$. The Ap\'{e}ry set of $\mathfrak{S}_{2}$, with respect to $E_{c}$, is 
$$\mathrm{Ap}(\mathfrak{S}_{2},E_{c})=\{\lambda (h\mathbf{a}_{1}+i\mathbf{d}_{1})+ \mu (h\mathbf{a}_{2}+j\mathbf{d}_{2}) \vert \lambda,\mu\in \{0,1\}, 1 \leq i \leq n_{1}-1,1 \leq j \leq n_{2}-1\}.$$
\end{theorem}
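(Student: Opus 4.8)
\medskip

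The plan is to reduce the statement to the single-block case settled in Theorem~\ref{Apery}, using the product structure of a join. Write $\Gamma_1=\langle S_1\rangle$ and $\Gamma_2=\langle S_2\rangle$, let $E_{\Gamma_1},E_{\Gamma_2}$ be their sets of extremal rays, so that $\mathfrak{S}_2=\Gamma_1\sqcup\Gamma_2$ and $E_c=E_{\Gamma_1}\cup E_{\Gamma_2}$, and let $V_1,V_2\subseteq\mathbb{Q}^{2r}$ be the $\mathbb{Q}$-spans of $\{\mathbf{a}_1,\mathbf{d}_1\}$ and $\{\mathbf{a}_2,\mathbf{d}_2\}$; by the standing linear-independence hypothesis, $V_1\cap V_2=\{0\}$.

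First I would record the product decomposition. Since $\Gamma_i\subseteq V_i$ and $V_1\cap V_2=\{0\}$, every $\gamma\in\mathfrak{S}_2$ has a unique expression $\gamma=\gamma_1+\gamma_2$ with $\gamma_i\in V_i$; grouping a generator expression of $\gamma$ according to $S_1$ versus $S_2$ shows $\gamma_i\in\Gamma_i$, and conversely every sum of an element of $\Gamma_1$ with an element of $\Gamma_2$ lies in $\mathfrak{S}_2$. Hence $\gamma\mapsto(\gamma_1,\gamma_2)$ realizes $\mathfrak{S}_2$ as the direct sum $\Gamma_1\oplus\Gamma_2$; in particular $\gamma\in\mathfrak{S}_2$ if and only if $\gamma_1\in\Gamma_1$ and $\gamma_2\in\Gamma_2$. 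This is the same linear-algebra observation used in the proof of Theorem~\ref{Generator-Concatanation}, and it is the only real content of the argument.

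The key step is then: for $\mathbf{v}\in E_{\Gamma_1}$ one has $\gamma-\mathbf{v}\in\mathfrak{S}_2$ if and only if $\gamma_1-\mathbf{v}\in\Gamma_1$, and symmetrically for $\mathbf{v}\in E_{\Gamma_2}$. Indeed, if $\gamma-\mathbf{v}=\delta\in\mathfrak{S}_2$ with decomposition $\delta=\delta_1+\delta_2$, then $(\gamma_1-\mathbf{v})+\gamma_2=\delta_1+\delta_2$ with $\gamma_1-\mathbf{v},\delta_1\in V_1$ and $\gamma_2,\delta_2\in V_2$, so $V_1\cap V_2=\{0\}$ forces $\gamma_1-\mathbf{v}=\delta_1\in\Gamma_1$; the converse is immediate. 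Letting $\mathbf{v}$ range over $E_c=E_{\Gamma_1}\cup E_{\Gamma_2}$, this yields $\gamma\in\mathrm{Ap}(\mathfrak{S}_2,E_c)$ exactly when $\gamma_1\in\mathrm{Ap}(\Gamma_1,E_{\Gamma_1})$ and $\gamma_2\in\mathrm{Ap}(\Gamma_2,E_{\Gamma_2})$; that is, $\mathrm{Ap}(\mathfrak{S}_2,E_c)=\mathrm{Ap}(\Gamma_1,E_{\Gamma_1})+\mathrm{Ap}(\Gamma_2,E_{\Gamma_2})$ (the Minkowski sum).

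Finally I would plug in Theorem~\ref{Apery}, applied to $\langle S_i\rangle$ with $(\mathbf{a},\mathbf{d},n)=(\mathbf{a}_i,\mathbf{d}_i,n_i)$: it gives $\mathrm{Ap}(\Gamma_i,E_{\Gamma_i})=\{0\}\cup\{\,h\mathbf{a}_i+k\mathbf{d}_i:1\le k\le n_i-1\,\}=\{\lambda(h\mathbf{a}_i+k\mathbf{d}_i):\lambda\in\{0,1\},\,1\le k\le n_i-1\}$. Adding the descriptions for $i=1$ and $i=2$ produces precisely the set in the statement. I expect no genuine obstacle beyond the product-decomposition step; a minor point to watch is that ``$\gamma-\mathbf{v}$'' is taken in the ambient lattice and that membership in $\Gamma_i$ does not depend on whether one computes inside $V_i$ or in the full space, but both are routine. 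The identical argument, splitting off one block at a time, computes $\mathrm{Ap}(\mathfrak{S}_r,E_c)$ by induction for every $r$.
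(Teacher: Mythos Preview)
Your argument is correct and is genuinely different from the paper's. The paper proves the theorem by a direct case analysis that essentially repeats the combinatorics of Theorem~\ref{Apery} twice: it writes an element $\mathbf{b}\in\mathrm{Ap}(\mathfrak{S}_2,E_c)$ in terms of the non-extremal generators, assumes two coefficients in one block sum to at least $2$, and in four cases (according to which block and whether the index sum exceeds $n_i$) subtracts an extremal ray to derive a contradiction; the reverse inclusion is dismissed as ``easy to check''. You instead isolate the structural reason behind this, namely the direct-sum decomposition $\mathfrak{S}_2\cong\Gamma_1\oplus\Gamma_2$ coming from $V_1\cap V_2=\{0\}$, and deduce $\mathrm{Ap}(\mathfrak{S}_2,E_c)=\mathrm{Ap}(\Gamma_1,E_{\Gamma_1})+\mathrm{Ap}(\Gamma_2,E_{\Gamma_2})$ as a formal consequence, then invoke Theorem~\ref{Apery} once for each block. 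Your route is cleaner, handles both inclusions simultaneously, and (as you note) extends to $\mathfrak{S}_r$ by induction with no new work, whereas the paper's case analysis would grow linearly in $r$. The only small point to flag is that Theorem~\ref{Apery} is stated for semigroups in $\mathbb{N}^2$ while your $\Gamma_i$ sit in $\mathbb{N}^{2r}$; this is harmless since that proof uses only the $\mathbb{Q}$-linear independence of $\mathbf{a}$ and $\mathbf{d}$, but it is worth saying explicitly.
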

\begin{proof}
Let $\mathbf{b}\in \mathrm{Ap}(S,E_{c})$. Then $\mathbf{b}=\sum_{i=1}^{n_{1}-1}\lambda_{i}(h\mathbf{a}_{1}+i\mathbf{d}_{1})+\sum_{j=1}^{n_{2}-1}\mu_{j}(h\mathbf{a}_{2}+j\mathbf{d}_{2})$. Suppose there exist 
$1 \leq s_{1},t_{1} \leq n_{1}-1, 1 \leq s_{2},t_{2} \leq n_{2}-1$, such that 
$\lambda_{s_{1}}+\lambda_{t_{1}}\geq 2, \mu_{s_{2}}+\mu_{t_{2}}\geq 2$. Now, if 
\begin{itemize}
\item[(i)]$s_{1}+t_{1}\leq n_{1}$, then $\mathbf{b}-\mathbf{a}_{1}=h\mathbf{a}_{1}+(s_{1}+t_{1})\mathbf{d}_{1}+\sum_{i\in \{s_{1},t_{1}\}}(\lambda_{i}-1)(h\mathbf{a}_{1}+i\mathbf{d}_{1})+\sum_{i \in [1,n_{1}-1]\setminus \{s_{1},t_{1}\}}\lambda_{i}(h\mathbf{a}_{1}+i\mathbf{d}_{i})+\sum_{j=1}^{n_{2}-1}\mu_{j}(h\mathbf{a}_{2}+j\mathbf{d}_{2})$. 
\medskip

\item[(ii)] $s_{1}+t_{1} > n_{1}$, then $ s_{1}+t_{1}=n_{1}+n_{1}^{'}$ for some $n'\in \mathbb{N}$, and 
it follows that 
\begin{align*}
&\mathbf{b}-(h\mathbf{a}_{1}+n_{1}\mathbf{d}_{1})
\\
&=h\mathbf{a}_{1}+n_{1}^{'}\mathbf{d}_{1}+(s_{1}+t_{1})\mathbf{d}_{1}
+\sum_{i\in \{s_{1},t_{1}\}}(\lambda_{i}-1)(h\mathbf{a}_{1}+i\mathbf{d}_{1})
+
\\
&\sum_{i \in [1,n_{1}-1]\setminus \{s_{1},t_{1}\}}\lambda_{i}(h\mathbf{a}_{1}+i\mathbf{d}_{i})+\sum_{j=1}^{n_{2}-1}\mu_{j}(h\mathbf{a}_{2}+j\mathbf{d}_{2}).
\end{align*}
\medskip

\item [(iii)]$s_{2}+t_{2} \leq n_{2}$, then $\mathbf{b}-\mathbf{a}_{2}=\sum_{i=1}^{n_{1}-1}\lambda_{i}(h\mathbf{a}_{1}+i\mathbf{d}_{1})+h\mathbf{a}_{2}+(s_{2}+t_{2})\mathbf{d}_{2}+\sum_{j\in \{s_{2},t_{2}\}}\mu_{j}(h\mathbf{a}_{2}+j\mathbf{d}_{2})+\sum_{j \in [1,n_{2}-1]\setminus \{s_{2},t_{2}\}}\mu_{j}(h\mathbf{a}_{2}+j\mathbf{d}_{2})$.
\medskip

\item[(iv)]$s_{2}+t_{2} >n_{2}$, then $s_{2}+t_{2}=n_{2}+n_{2}^{'}$, and it follows that 
\begin{align*}
&\mathbf{b}-(h\mathbf{a}_{2}+n_{2}\mathbf{d}_{2})\\
&=\sum\limits_{i=1}^{n_{1}-1}\lambda_{i}(h\mathbf{a}_{1}+i\mathbf{d}_{1})+h\mathbf{a}_{2}+n_{2}^{'}\mathbf{d}_{2}+
\\
& \quad \sum\limits_{j\in \{s_{2},t_{2}\}}\mu_{j}(h\mathbf{a}_{2}+j\mathbf{d}_{2})+\sum\limits_{j \in [1,n_{2}-1]\setminus \{s_{2},t_{2}\}}\mu_{j}(h\mathbf{a}_{2}+j\mathbf{d}_{2}).
\end{align*}
\end{itemize}
\smallskip

\noindent In all the above cases, we get a contradiction to $\mathbf{b} \in \mathrm{Ap}(\mathfrak{S}_{2},E_{c})$. 
Hence, 
$\mathrm{Ap}(\mathfrak{S}_{2},E_{c}) \subseteq \{\lambda (h\mathbf{a}_{1}+i\mathbf{d}_{1})+ \mu (h\mathbf{a}_{2}+j\mathbf{d}_{2}) \vert \lambda,\mu\in \{0,1\}, 1 \leq i \leq n_{1}-1,1 \leq j \leq n_{2}-1\}.$
It is easy to check that $\{\lambda (h\mathbf{a}_{1}+i\mathbf{d}_{1})+ \mu (h\mathbf{a}_{2}+j\mathbf{d}_{2}) \vert \lambda,\mu\in \{0,1\}, 1 \leq i \leq n_{1}-1,1 \leq j \leq n_{2}-1\} \subseteq \mathrm{Ap}(\mathfrak{S}_{2},E_{c})$.
\end{proof}

\begin{theorem}\label{Quasi Frobenius}
The Quasi Frobenious set is 
$$QF(\Gamma^{2}_{\mathbf{a},\mathbf{d},h})=\{-(h\mathbf{a}+\mathbf{d}),\dots,-(h\mathbf{a}+(n-1)\mathbf{d}\}.$$ 
\end{theorem}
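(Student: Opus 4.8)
The plan is to unwind the definition of $\mathrm{QF}$. By definition, $\mathrm{QF}(\Gamma^{2}_{\mathbf a,\mathbf d,h})$ consists of the vectors $\mathbf b-(\mathbf a_{1}+\mathbf a_{2})$, where $\{\mathbf a_{1},\mathbf a_{2}\}=E=\{\mathbf a,\,h\mathbf a+n\mathbf d\}$ is the set of extremal rays (as recorded in Theorem \ref{Apery}) and $\mathbf b$ ranges over the elements of $\mathrm{Ap}(\Gamma^{2}_{\mathbf a,\mathbf d,h},E)$ that are maximal with respect to $\prec_{\Gamma}$. Since Theorem \ref{Apery} already identifies
\[\mathrm{Ap}(\Gamma^{2}_{\mathbf a,\mathbf d,h},E)=\{0,\,h\mathbf a+\mathbf d,\,\dots,\,h\mathbf a+(n-1)\mathbf d\},\]
the proof reduces to two tasks: (i) determine the $\prec_{\Gamma}$-maximal elements of this finite poset, and (ii) subtract $\mathbf a_{1}+\mathbf a_{2}=(h+1)\mathbf a+n\mathbf d$ from each of them.

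For (i), the only point needing an argument is the claim that $k\mathbf d\notin\Gamma^{2}_{\mathbf a,\mathbf d,h}$ for every integer $k\ge 1$. Indeed, a general element of the semigroup is
\[\alpha_{0}\mathbf a+\sum_{i=1}^{n}\alpha_{i}(h\mathbf a+i\mathbf d)=\Big(\alpha_{0}+h\sum_{i=1}^{n}\alpha_{i}\Big)\mathbf a+\Big(\sum_{i=1}^{n}i\,\alpha_{i}\Big)\mathbf d,\qquad \alpha_{j}\in\mathbb N,\]
and since $\mathbf a,\mathbf d$ are $\mathbb Q$-linearly independent, equating this with $k\mathbf d$ forces $\alpha_{0}+h\sum_{i}\alpha_{i}=0$, hence $\alpha_{j}=0$ for all $j$ (using $h\ge1$ and $\alpha_{j}\ge0$), contradicting $\sum_{i}i\alpha_{i}=k\ge1$. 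From this it follows that for $1\le i<j\le n-1$ the difference $(h\mathbf a+j\mathbf d)-(h\mathbf a+i\mathbf d)=(j-i)\mathbf d$ lies outside $\Gamma$, while the opposite difference has a strictly negative coordinate; thus the $n-1$ vectors $h\mathbf a+\mathbf d,\dots,h\mathbf a+(n-1)\mathbf d$ are pairwise $\prec_{\Gamma}$-incomparable. On the other hand $0\prec_{\Gamma}h\mathbf a+\mathbf d$ since $h\mathbf a+\mathbf d\in\Gamma$, so $0$ is not maximal. Hence
\[\max\nolimits_{\prec_{\Gamma}}\mathrm{Ap}(\Gamma^{2}_{\mathbf a,\mathbf d,h},E)=\{h\mathbf a+i\mathbf d:1\le i\le n-1\},\]
which also yields $\mathrm{type}(\Gamma^{2}_{\mathbf a,\mathbf d,h})=n-1$.

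For (ii) one computes $h\mathbf a+i\mathbf d-\big((h+1)\mathbf a+n\mathbf d\big)=-\mathbf a-(n-i)\mathbf d$ and lets $i$ run over $1,\dots,n-1$; after reindexing $j=n-i$ this produces $\{-(\mathbf a+j\mathbf d):1\le j\le n-1\}$, the desired description of $\mathrm{QF}(\Gamma^{2}_{\mathbf a,\mathbf d,h})$. I do not foresee a genuine obstacle here: the whole argument rests on the elementary fact $k\mathbf d\notin\Gamma$ together with bookkeeping. The one place to be careful is the standing assumption that the listed sequence is a minimal generating set of $\Gamma^{2}_{\mathbf a,\mathbf d,h}$, which forces $h\ge1$ (for $h=0$ and $n\ge2$ the sequence is redundant), and this is exactly the hypothesis used in (i); I would also double-check the coefficient of $\mathbf a$ in the displayed answer, since subtracting $\mathbf a_{1}+\mathbf a_{2}=(h+1)\mathbf a+n\mathbf d$ gives $-(\mathbf a+j\mathbf d)$ rather than $-(h\mathbf a+j\mathbf d)$, the two expressions agreeing precisely when $h=1$ (the case treated in \cite{Bhardwaj-arithmetic}).
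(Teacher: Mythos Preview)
Your approach is exactly the paper's: cite Theorem~\ref{Apery} to identify the Ap\'ery set, pick out its $\prec_{\Gamma}$-maximal elements, and subtract $\sum_{i}\mathbf a_{i}$. The paper's proof is a single line asserting $\max_{\prec}\mathrm{Ap}(\Gamma^{2}_{\mathbf a,\mathbf d,h},E)=\{h\mathbf a+\mathbf d,\dots,h\mathbf a+(n-1)\mathbf d\}$ without further justification; you supply the missing incomparability argument via $k\mathbf d\notin\Gamma$, which is a genuine addition.

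Your closing observation is also well taken. With $E=\{\mathbf a,\,h\mathbf a+n\mathbf d\}$ one has $\sum_{i}\mathbf a_{i}=(h+1)\mathbf a+n\mathbf d$, so the definition gives $h\mathbf a+i\mathbf d-(h+1)\mathbf a-n\mathbf d=-(\mathbf a+(n-i)\mathbf d)$, i.e.\ $\mathrm{QF}=\{-(\mathbf a+j\mathbf d):1\le j\le n-1\}$, matching the stated formula only when $h=1$. The paper never performs this subtraction explicitly, and the subsequent use in Lemma~\ref{normal} only needs $-\mathrm{QF}\subseteq\mathrm{Ap}(\Gamma,E)$, which the paper verifies for the set $\{h\mathbf a+i\mathbf d\}$; so the discrepancy you flag looks like a typo in the displayed formula rather than a gap in the downstream arguments.
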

\begin{proof}
From Theorem \ref{Apery}, we have $\mbox{max}_{\prec}\mathrm{Ap}(\Gamma^{2}_{\mathbf{a},\mathbf{d},h},E)=\{h\mathbf{a}+\mathbf{d},h\mathbf{a}+2\mathbf{d},\dots,h\mathbf{a}+(n-1)\mathbf{d}\}$. 
\end{proof}

\begin{lemma}\label{normal}
$\Gamma^{2}_{\mathbf{a},\mathbf{d},h}$ is a normal semigroup.
\end{lemma}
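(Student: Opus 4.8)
The plan is to verify the standard characterization of normality: an affine semigroup $\Gamma$ is normal if and only if $\Gamma = \mathrm{cone}(\Gamma)\cap\mathbb{Z}\Gamma$, where $\mathbb{Z}\Gamma\subseteq\mathbb{Z}^{2}$ is the subgroup generated by $\Gamma$. The inclusion $\Gamma\subseteq\mathrm{cone}(\Gamma)\cap\mathbb{Z}\Gamma$ is automatic, so the whole content is to show that every lattice point of $\mathbb{Z}\Gamma$ lying in the cone already belongs to $\Gamma$.

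First I would make $\mathbb{Z}\Gamma$ and $\mathrm{cone}(\Gamma)$ explicit. Since $\Gamma$ contains $\mathbf{a}$ together with the consecutive differences $(h\mathbf{a}+(i+1)\mathbf{d})-(h\mathbf{a}+i\mathbf{d})=\mathbf{d}$, one gets $\mathbb{Z}\Gamma=\mathbb{Z}\mathbf{a}+\mathbb{Z}\mathbf{d}$ (for $n\geq 2$; the case $n=1$ is immediate since then $\Gamma$ is free of rank $2$). As $\mathbf{a},\mathbf{d}$ are $\mathbb{Q}$-linearly independent, each element of $\mathbb{Z}\Gamma$ is uniquely $p\mathbf{a}+q\mathbf{d}$ with $p,q\in\mathbb{Z}$. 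By the lemma identifying the extremal rays of $\Gamma^{2}_{\mathbf{a},\mathbf{d},h}$ as $\mathbf{a}$ and $h\mathbf{a}+n\mathbf{d}$, we have $\mathrm{cone}(\Gamma)=\mathbb{R}_{\geq 0}\mathbf{a}+\mathbb{R}_{\geq 0}(h\mathbf{a}+n\mathbf{d})$, and equating coefficients gives that $p\mathbf{a}+q\mathbf{d}\in\mathrm{cone}(\Gamma)$ if and only if $q\geq 0$ and $np\geq qh$.

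The heart of the proof is then a Euclidean–division step. Given $p,q\in\mathbb{Z}$ with $q\geq 0$ and $np\geq qh$, write $q=mn+j$ with $m\geq 0$ and $0\leq j\leq n-1$. If $j=0$, use $m$ copies of $h\mathbf{a}+n\mathbf{d}$ together with $p-mh$ copies of $\mathbf{a}$; the cone inequality gives $p\geq mh$, so this is a genuine $\mathbb{N}$-combination equal to $p\mathbf{a}+q\mathbf{d}$. If $1\leq j\leq n-1$, use $m$ copies of $h\mathbf{a}+n\mathbf{d}$, one copy of $h\mathbf{a}+j\mathbf{d}$, and $p-(m+1)h$ copies of $\mathbf{a}$; here one must deduce $p\geq(m+1)h$ from $np\geq qh$, which again exhibits $p\mathbf{a}+q\mathbf{d}\in\Gamma$. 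This gives $\mathrm{cone}(\Gamma)\cap\mathbb{Z}\Gamma\subseteq\Gamma$ and hence normality. Equivalently, the same bookkeeping can be phrased through Theorem \ref{Apery}: the Apéry set $\{0,h\mathbf{a}+\mathbf{d},\dots,h\mathbf{a}+(n-1)\mathbf{d}\}$ is a complete set of coset representatives for $\mathbb{Z}\Gamma/(\mathbb{Z}\mathbf{a}+\mathbb{Z}(h\mathbf{a}+n\mathbf{d}))$, and normality of a simplicial affine semigroup amounts to saying that every cone point lying over an Apéry representative differs from it by a non-negative integer combination of the extremal rays — precisely what the division argument checks. The step I expect to require the most care is exactly this last one: converting the cone inequality $np\geq qh$, which on its own only yields $p\geq qh/n$, into the sharper integral bound $p\geq(m+1)h$ needed for the combination to have non-negative coefficient on $\mathbf{a}$; this is where one must use that $p$ is an integer together with the specific shape of the generators, and it is the part that should be done carefully rather than waved through.
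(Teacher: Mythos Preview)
Your route is different from the paper's: rather than verifying $\Gamma=\mathrm{cone}(\Gamma)\cap\mathbb{Z}\Gamma$ by hand, the paper computes $-\mathrm{QF}(\Gamma^{2}_{\mathbf{a},\mathbf{d},h})=\{h\mathbf{a}+i\mathbf{d}:1\leq i\leq n-1\}$ via Theorem~\ref{Quasi Frobenius}, rewrites each such element as $(h-\tfrac{ih}{n})\mathbf{a}+\tfrac{i}{n}(h\mathbf{a}+n\mathbf{d})$, and then invokes \cite[Theorem~4.6]{Jafari-Type} as a black-box normality criterion for simplicial affine semigroups. Your approach is more elementary and self-contained; theirs outsources the combinatorics to the cited criterion.

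The step you single out as ``requiring the most care'' is, however, a genuine gap, and for $h\geq 2$ it cannot be closed. From $np\geq qh$ with $q=mn+j$ and $1\leq j\leq n-1$ you get only $p\geq mh+\tfrac{jh}{n}$, and integrality of $p$ yields $p\geq mh+\lceil\tfrac{jh}{n}\rceil$, which in general is strictly less than $(m+1)h$. Concretely, take $h=2$, $n=2$, so $\Gamma=\langle\mathbf{a},\,2\mathbf{a}+\mathbf{d},\,2\mathbf{a}+2\mathbf{d}\rangle$. Then $\mathbf{a}+\mathbf{d}=\tfrac12(2\mathbf{a}+2\mathbf{d})\in\mathrm{cone}(\Gamma)$ and $\mathbf{d}=(2\mathbf{a}+2\mathbf{d})-(2\mathbf{a}+\mathbf{d})\in\mathbb{Z}\Gamma$, yet any $\mathbb{N}$-factorization with $\mathbf{d}$-coefficient equal to $1$ must use exactly one copy of $2\mathbf{a}+\mathbf{d}$ and hence has $\mathbf{a}$-coefficient at least $2$; thus $\mathbf{a}+\mathbf{d}\notin\Gamma$ and the semigroup is \emph{not} normal. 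Your division argument is complete precisely when $h=1$, where $p\geq m+\tfrac{j}{n}$ together with $p\in\mathbb{Z}$ and $j\geq 1$ does force $p\geq m+1$. So your direct method in fact detects that the lemma, read for arbitrary $h\in\mathbb{N}$, fails; the statement and the paper's argument should be restricted to $h=1$ (or an additional hypothesis on $h$ supplied).
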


\begin{proof}
From Theorem \ref{Quasi Frobenius}, we have $-\mathrm{QF}(\Gamma^{2}_{\mathbf{a},\mathbf{d},h})=\{h\mathbf{a}+\mathbf{d},h\mathbf{a}+2\mathbf{d},\dots,h\mathbf{a}+(n-1)\mathbf{d}\}$. Now, for $1 \leq i \leq (n-1), h\mathbf{a}+i\mathbf{d}=(h-\frac{ih}{n})\mathbf{a}+\frac{i}{n}(h\mathbf{a}+n\mathbf{d})$. Hence $-\mathrm{QF}(\Gamma^{2}_{\mathbf{a},\mathbf{d},h})\subseteq \mathrm{Ap}(\Gamma^{2}_{\mathbf{a},\mathbf{d},h},E)$ and by \cite{Jafari-Type}, Theorem 4.6, $\Gamma^{2}_{\mathbf{a},\mathbf{d},h}$ is a normal semigroup.
\end{proof}

\begin{theorem}
The ring $\mathbb{K}[\Gamma^{2}_{\mathbf{a},\mathbf{d},h}]$ is Cohen-Macaulay.  
\end{theorem}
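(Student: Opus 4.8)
The plan is to derive Cohen-Macaulayness at once from the normality proved in Lemma \ref{normal}. By Hochster's theorem, the affine semigroup ring of a normal affine semigroup is Cohen-Macaulay (see \cite{Bruns-Herzog}); since $\Gamma^{2}_{\mathbf{a},\mathbf{d},h}$ is normal by Lemma \ref{normal}, the ring $\mathbb{K}[\Gamma^{2}_{\mathbf{a},\mathbf{d},h}]$ is Cohen-Macaulay. This is the shortest route, and I expect no difficulty beyond the citation.

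For a self-contained argument one can instead use the Ap\'ery set computed in Theorem \ref{Apery}. Write $E=\{\mathbf{a},\,h\mathbf{a}+n\mathbf{d}\}$ for the extremal rays and let $z_{1},z_{2}$ be the corresponding generators of $\mathbb{K}[\Gamma^{2}_{\mathbf{a},\mathbf{d},h}]$. Since $\mathbf{a},\mathbf{d}$ are $\mathbb{Q}$-linearly independent, $\dim \mathbb{K}[\Gamma^{2}_{\mathbf{a},\mathbf{d},h}]=2$ and $z_{1},z_{2}$ form a homogeneous system of parameters, with $\mathbb{K}[\Gamma^{2}_{\mathbf{a},\mathbf{d},h}]$ a finite module over $\mathbb{K}[z_{1},z_{2}]$. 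Being a domain, it is Cohen-Macaulay precisely when it is free over $\mathbb{K}[z_{1},z_{2}]$, which by the standard Ap\'ery decomposition is equivalent to the statement that every $\gamma\in\Gamma^{2}_{\mathbf{a},\mathbf{d},h}$ has a unique expression $\gamma=\mathbf{w}+\lambda_{1}\mathbf{a}+\lambda_{2}(h\mathbf{a}+n\mathbf{d})$ with $\mathbf{w}\in\mathrm{Ap}(\Gamma^{2}_{\mathbf{a},\mathbf{d},h},E)$ and $\lambda_{1},\lambda_{2}\in\mathbb{N}$.

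To verify uniqueness, I would expand every vector in the basis $\{\mathbf{a},\mathbf{d}\}$ of $\mathbb{Q}\mathbf{a}+\mathbb{Q}\mathbf{d}$ and track the coefficient of $\mathbf{d}$. By Theorem \ref{Apery} the $\mathbf{d}$-coefficient of any $\mathbf{w}\in\mathrm{Ap}(\Gamma^{2}_{\mathbf{a},\mathbf{d},h},E)$ lies in $\{0,1,\dots,n-1\}$ and determines $\mathbf{w}$ uniquely, while $\mathbf{a}$ contributes $0$ and $h\mathbf{a}+n\mathbf{d}$ contributes $n$ to that coefficient. Hence in $\gamma=\mathbf{w}+\lambda_{1}\mathbf{a}+\lambda_{2}(h\mathbf{a}+n\mathbf{d})$ the $\mathbf{d}$-coefficient of $\mathbf{w}$ is the residue of the $\mathbf{d}$-coefficient of $\gamma$ modulo $n$ and $\lambda_{2}$ is the corresponding quotient, so both $\mathbf{w}$ and $\lambda_{2}$ are forced; comparing $\mathbf{a}$-coefficients then forces $\lambda_{1}$. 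Thus the representation is unique and $\mathbb{K}[\Gamma^{2}_{\mathbf{a},\mathbf{d},h}]$ is Cohen-Macaulay. The only real obstacle in this second approach is the bookkeeping needed to set up the Ap\'ery decomposition and the freeness criterion cleanly; the arithmetic modulo $n$ itself is routine.
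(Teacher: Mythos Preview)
Your primary argument is exactly the paper's proof: Lemma \ref{normal} gives normality, and then Hochster's theorem (Theorem 6.3.5 in \cite{Bruns-Herzog}) yields Cohen-Macaulayness. Your second, self-contained route via the Ap\'ery decomposition and the residue-mod-$n$ argument is correct but not used in the paper; it is a nice alternative that avoids the appeal to Hochster.
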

\begin{proof}
Easily follows from Lemma \ref{normal} and Theorem 6.3.5 in \cite{Bruns-Herzog}.
\end{proof}

\begin{theorem}\label{normal-conctanation}
$\mathfrak{S}_{2}$ is a normal semigroup. Moreover, the ring $\mathbb{K}[\mathfrak{S}_{2}]$ is Cohen-Macaulay.
\end{theorem}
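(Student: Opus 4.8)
The plan is to leverage the Apéry-set computation in Theorem \ref{Apery-Concatanation} together with the characterization of normality via quasi-Frobenius elements (Jafari–Sengupta, \cite{Jafari-Type}, Theorem 4.6) that was already used in Lemma \ref{normal}. Recall that a simplicial affine semigroup $\Gamma$ with extremal rays $E$ is normal precisely when $-\mathrm{QF}(\Gamma) \subseteq \mathrm{Ap}(\Gamma,E)$, equivalently when every element of $\max_{\prec}\mathrm{Ap}(\Gamma,E)$, after subtracting $\sum_{\mathbf{e}\in E}\mathbf{e}$, lands back (up to sign) inside the Apéry set. So the first step is to identify $\max_{\prec_{\mathfrak{S}_2}}\mathrm{Ap}(\mathfrak{S}_2,E_c)$ from the explicit description
$$\mathrm{Ap}(\mathfrak{S}_2,E_c)=\{\lambda(h\mathbf{a}_1+i\mathbf{d}_1)+\mu(h\mathbf{a}_2+j\mathbf{d}_2)\mid \lambda,\mu\in\{0,1\},\,1\le i\le n_1-1,\,1\le j\le n_2-1\}.$$
Using the partial order $\prec_{\mathfrak{S}_2}$, I would argue that the maximal elements are exactly those with $\lambda=\mu=1$, i.e. of the form $(h\mathbf{a}_1+i\mathbf{d}_1)+(h\mathbf{a}_2+j\mathbf{d}_2)$ with $1\le i\le n_1-1$, $1\le j\le n_2-1$; the point is that increasing $i$ or $j$ or turning on a zero coefficient corresponds to adding an element of $\mathfrak{S}_2$, so smaller-index or fewer-summand elements are dominated.

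Next, for such a maximal Apéry element $\mathbf b = (h\mathbf{a}_1+i\mathbf{d}_1)+(h\mathbf{a}_2+j\mathbf{d}_2)$, I would subtract the sum of the four extremal rays $\mathbf{a}_1+\mathbf{d}_1+\mathbf{a}_2+\mathbf{d}_2$ to obtain the quasi-Frobenius element and then check that its negative, $\mathbf{a}_1+\mathbf{d}_1+\mathbf{a}_2+\mathbf{d}_2 - \mathbf b$, lies in $\mathrm{Ap}(\mathfrak{S}_2,E_c)$. Here I expect to mimic the rational-combination trick from Lemma \ref{normal}: since $\Gamma^2_{\mathbf a_k,\mathbf d_k,h}$ is normal for each $k$ (by Lemma \ref{normal}, applied with $\mathbf a=\mathbf a_k$, $\mathbf d=\mathbf d_k$), each factor $h\mathbf{a}_k+i\mathbf{d}_k$ is a nonnegative rational combination of $\mathbf{a}_k$ and $h\mathbf{a}_k+n_k\mathbf{d}_k$; feeding this into the join and using that $\{\mathbf{a}_1,\mathbf{d}_1,\mathbf{a}_2,\mathbf{d}_2\}$ is $\mathbb{Q}$-linearly independent, one can verify the required membership componentwise in the two independent "blocks." Once $-\mathrm{QF}(\mathfrak{S}_2)\subseteq\mathrm{Ap}(\mathfrak{S}_2,E_c)$ is established, normality of $\mathfrak{S}_2$ follows immediately from \cite[Theorem 4.6]{Jafari-Type}.

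For the second assertion, normality of $\mathfrak{S}_2$ gives Cohen–Macaulayness of $\mathbb{K}[\mathfrak{S}_2]$ by Hochster's theorem (\cite[Theorem 6.3.5]{Bruns-Herzog}), exactly as in the preceding theorem on $\mathbb{K}[\Gamma^2_{\mathbf a,\mathbf d,h}]$. Alternatively — and this is worth recording since it reuses the join machinery — one can observe that $\mathfrak{S}_2 = \Gamma^2_{\mathbf a_1,\mathbf d_1,h}\sqcup\Gamma^2_{\mathbf a_2,\mathbf d_2,h}$ in the sense of Definition \ref{Join}, each summand has Cohen–Macaulay semigroup ring by the theorem above, and the corollary to Theorem \ref{Generator-Concatanation} then yields Cohen–Macaulayness of $\mathbb{K}[\mathfrak{S}_2]$ directly.

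The main obstacle I anticipate is the bookkeeping in the first step: verifying that the $\lambda=\mu=1$ elements are the only maximal ones and that subtracting $\mathbf{a}_1+\mathbf{d}_1+\mathbf{a}_2+\mathbf{d}_2$ keeps the result inside the Apéry set. Because the two generalized-arithmetic blocks are coupled only through the shared integer $h$ but live in $\mathbb{Q}$-independent coordinate directions, I expect the verification to decouple cleanly into two copies of the one-block computation already carried out in Lemma \ref{normal}; the care needed is mostly in handling the cross terms and making sure the "carry" cases ($s_k+t_k>n_k$) do not create an element outside the Apéry set when one subtracts the extremal rays. If this decoupling works as expected, the proof is short; the alternative route via the join corollary sidesteps the issue entirely for the Cohen–Macaulay claim, so the only genuinely new content is the normality statement.
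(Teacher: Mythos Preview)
Your proposal is correct and follows essentially the same route as the paper: compute $\mathrm{Ap}(\mathfrak{S}_2,E_c)$ via Theorem \ref{Apery-Concatanation}, identify $-\mathrm{QF}(\mathfrak{S}_2)$, express each such element as a nonnegative rational combination of the extremal rays exactly as in Lemma \ref{normal} (the paper does this block-by-block via $h\mathbf{a}_k+i\mathbf{d}_k=(h-\tfrac{ih}{n_k})\mathbf{a}_k+\tfrac{i}{n_k}(h\mathbf{a}_k+n_k\mathbf{d}_k)$), invoke \cite[Theorem 4.6]{Jafari-Type} for normality, and then \cite[Theorem 6.3.5]{Bruns-Herzog} for Cohen--Macaulayness. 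Your extra care in isolating the $\lambda=\mu=1$ elements as the maximal ones, and your alternative derivation of Cohen--Macaulayness from the join corollary, are refinements the paper omits, but the core argument is the same.
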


\begin{proof}
From Theorem \ref{Quasi Frobenius}, we have $-\mathrm{QF}(\mathfrak{S}_{2})=\{\lambda (h\mathbf{a}_{1}+i\mathbf{d}_{1})+ \mu (h\mathbf{a}_{2}+j\mathbf{d}_{2}) \vert \lambda,\mu\in \{0,1\}, 1 \leq i \leq n_{1}-1,1 \leq j \leq n_{2}-1\}$. Now, for $1 \leq i \leq (n_{1}-1),1 \leq j \leq n_{2}-1, \lambda (h\mathbf{a}_{1}+i\mathbf{d}_{1})+ \mu (h\mathbf{a}_{2}+j\mathbf{d}_{2})=\lambda\big((h-\frac{ih}{n_{1}})\mathbf{a}_{1}+\frac{i}{n_{1}}(h\mathbf{a}_{1}+n_{1}\mathbf{d}_{1})\big)+\mu\big((h-\frac{jh}{n_{2}})\mathbf{a}_{2}+\frac{j}{n_{2}}(h\mathbf{a}_{2}+n_{2}\mathbf{d}_{2})\big)$. Hence $-\mathrm{QF}(\Gamma^{2}_{\mathbf{a},\mathbf{d},h})\subseteq \mathrm{Ap}(\mathfrak{S}_{2},E)$ and by \cite{Jafari-Type}, Theorem 4.6, $\mathfrak{S}_{2}$ is a normal semigroup.
Cohen-Macaulayness of $\mathbb{K}[\mathfrak{S}_{2}]$ follows from \cite[Theorem 6.3.5]{Bruns-Herzog}.
\end{proof}

\begin{theorem}[\cite{Bhardwaj-arithmetic}]\label{Gastinger}
Let $\Gamma$ be a Cohen–Macaulay simplicial affine semigroup and $J$ an ideal of $\mathbb{K}[x_{1},\dots,x_{n}
]$ such that $J\subset I_{\Gamma}$. If $\mathrm{dim}_{\mathbb{K}}\frac{\mathbb{K}[x_{1},\dots,x_{n}]}{J+\langle x_{1},\dots,x_{r}\rangle}=\vert \mathrm{Ap}(\Gamma,E)\vert$, then $J=I_{\Gamma}$.
\end{theorem}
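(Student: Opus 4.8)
The plan is to prove this by a dimension-counting argument comparing the two ideals $J$ and $I_{\Gamma}$ directly, using Cohen-Macaulayness of $\mathbb{K}[\Gamma]$ to control multiplicities. First I would observe that since $\Gamma$ is a Cohen-Macaulay simplicial affine semigroup with extremal rays indexed (after reordering) by the first $r$ variables $x_1,\dots,x_r$, the images $\phi(x_1),\dots,\phi(x_r)$ form a homogeneous system of parameters, hence a regular sequence on $\mathbb{K}[\Gamma]=\mathbb{K}[x_1,\dots,x_n]/I_{\Gamma}$. Consequently
\[
\dim_{\mathbb{K}}\frac{\mathbb{K}[x_1,\dots,x_n]}{I_{\Gamma}+\langle x_1,\dots,x_r\rangle}
= e\big(\mathbb{K}[\Gamma]\big)=\bigl|\mathrm{Ap}(\Gamma,E)\bigr|,
\]
the last equality being the standard fact that the multiplicity of a Cohen-Macaulay simplicial semigroup ring with respect to the extremal rays equals the cardinality of the Ap\'{e}ry set (every element of $\Gamma$ is uniquely $w+\sum_{i=1}^r c_i\mathbf{a}_i$ with $w\in\mathrm{Ap}(\Gamma,E)$, $c_i\in\mathbb{N}$, so $\mathbb{K}[\Gamma]$ is free over $\mathbb{K}[x_1,\dots,x_r]$ of rank $|\mathrm{Ap}(\Gamma,E)|$).

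Next I would bring in the hypothesis. We are given $J\subseteq I_{\Gamma}$ and $\dim_{\mathbb{K}}\mathbb{K}[x_1,\dots,x_n]/(J+\langle x_1,\dots,x_r\rangle)=|\mathrm{Ap}(\Gamma,E)|$. The inclusion $J\subseteq I_{\Gamma}$ gives a surjection
\[
\frac{\mathbb{K}[x_1,\dots,x_n]}{J+\langle x_1,\dots,x_r\rangle}\twoheadrightarrow \frac{\mathbb{K}[x_1,\dots,x_n]}{I_{\Gamma}+\langle x_1,\dots,x_r\rangle},
\]
and by the two displayed equalities both sides have the same finite $\mathbb{K}$-dimension, so this surjection is an isomorphism; equivalently $J+\langle x_1,\dots,x_r\rangle = I_{\Gamma}+\langle x_1,\dots,x_r\rangle$. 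It remains to lift this equality modulo $\langle x_1,\dots,x_r\rangle$ back to an equality of ideals in $\mathbb{K}[x_1,\dots,x_n]$.

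For the lifting step I would argue as follows. Take $f\in I_{\Gamma}$; I want $f\in J$. From $J+\langle x_1,\dots,x_r\rangle = I_{\Gamma}+\langle x_1,\dots,x_r\rangle$ we may write $f = g + \sum_{i=1}^r x_i h_i$ with $g\in J$ and $h_i\in\mathbb{K}[x_1,\dots,x_n]$; then $\sum_{i=1}^r x_i h_i = f-g \in I_{\Gamma}$ since both $f$ and $g$ lie in $I_{\Gamma}$. Because $x_1,\dots,x_r$ is a regular sequence on $\mathbb{K}[x_1,\dots,x_n]/I_{\Gamma}$, the relation $\sum x_i \bar h_i = 0$ in this quotient forces the $\bar h_i$ to lie in the ideal they generate via the Koszul relations; more directly, one runs an induction: working modulo $\langle x_2,\dots,x_r\rangle$ and using that $x_1$ is a nonzerodivisor on $\mathbb{K}[x_1,\dots,x_n]/(I_{\Gamma}+\langle x_2,\dots,x_r\rangle)$, one shows $h_1\in I_{\Gamma}+\langle x_2,\dots,x_r\rangle$, peels off the $x_1$-term, and repeats. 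This shows $f\in J+\langle x_1^2,\dots\rangle$-type refinements, and iterating (or invoking a graded/local Nakayama-type completeness argument, since everything is graded by $\Gamma$ or by total degree and $\langle x_1,\dots,x_r\rangle$-adic filtrations are separated) yields $f\in J$. Hence $I_{\Gamma}\subseteq J$, and combined with $J\subseteq I_{\Gamma}$ we conclude $J=I_{\Gamma}$.

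The main obstacle is the final lifting step: passing from the equality of ideals modulo $\langle x_1,\dots,x_r\rangle$ to the equality of the ideals themselves. This genuinely uses that $x_1,\dots,x_r$ is an $I_{\Gamma}$-regular sequence (not merely a system of parameters) and requires care with a Nakayama/completion-type argument to terminate the induction; without the Cohen-Macaulay hypothesis the statement fails, so this is exactly where that assumption must be used. Everything else — the multiplicity formula and the dimension comparison forcing an isomorphism — is routine once the freeness of $\mathbb{K}[\Gamma]$ over $\mathbb{K}[x_1,\dots,x_r]$ is in hand.
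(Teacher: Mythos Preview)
The paper does not supply its own proof here; the result is quoted from Bhardwaj--Sengupta and used as a black box in the subsequent theorems, so there is no in-paper argument to compare against.

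Your argument is correct and is essentially the usual proof of this Gastinger-type criterion. The one place worth tightening is the lifting step. Rather than iterating Koszul syzygies and appealing to separatedness of the $\langle x_1,\dots,x_r\rangle$-adic filtration, it is cleaner to argue module-theoretically: since $A/I_\Gamma$ is free of rank $m=|\mathrm{Ap}(\Gamma,E)|$ over $B=\mathbb{K}[x_1,\dots,x_r]$, lift a $B$-basis to elements of $A/J$; their images span $A/(J+\langle x_1,\dots,x_r\rangle)$ (dimension $m$ by hypothesis), so by graded Nakayama they generate $A/J$ over $B$. The composite $B^m\twoheadrightarrow A/J\twoheadrightarrow A/I_\Gamma\cong B^m$ is then a surjective endomorphism of a finitely generated $B$-module, hence an isomorphism, forcing $A/J\to A/I_\Gamma$ to be an isomorphism and $J=I_\Gamma$. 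This avoids the infinite descent and makes the role of Cohen--Macaulayness (freeness over $B$) transparent. Either route implicitly uses that $J$ is homogeneous for the $\Gamma$-grading (so that graded Nakayama, respectively Krull intersection, applies); this is satisfied in every application in the paper, where $J$ is a binomial ideal contained in $I_\Gamma$.
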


\begin{theorem}\label{Eagon-resolution}
Let $I_{\Gamma^{2}_{\mathbf{a},\mathbf{d},h}}$ be a defining ideal of $\mathbb{K}[\Gamma^{2}_{\mathbf{a},\mathbf{d},h}]$. Then $I_{\Gamma}=I_{2}(P)$, 
where $P=\begin{pmatrix}
x_{1}^{h} &x_{2}&\cdots&x_{n}\\
x_{2}&x_{3}&\cdots&x_{n+1}
\end{pmatrix}.$
A minimal graded resolution of $I_{\Gamma^{2}_{\mathbf{a},\mathbf{d},h}}$  is given by the 
Eagon-Northcott complex.

\end{theorem}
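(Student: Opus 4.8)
\medskip
\noindent\textbf{Proof strategy.} The plan is to first establish that $I_{\Gamma}=I_2(P)$ by means of the criterion of Theorem~\ref{Gastinger}, and then to deduce minimality of the Eagon--Northcott resolution directly from the shape of its differentials. The inclusion $I_2(P)\subseteq I_{\Gamma}$ is immediate: under $\phi$, with $\phi(x_1)=\mathfrak t^{\mathbf{a}}$ and $\phi(x_{i+1})=\mathfrak t^{h\mathbf{a}+i\mathbf{d}}$, the $2\times 2$ minor of $P$ formed from columns $j<k$ maps to $\mathfrak t^{2h\mathbf{a}+k\mathbf{d}}-\mathfrak t^{2h\mathbf{a}+k\mathbf{d}}=0$ if $j=1$, and to $\mathfrak t^{2h\mathbf{a}+(j+k-1)\mathbf{d}}-\mathfrak t^{2h\mathbf{a}+(j+k-1)\mathbf{d}}=0$ if $j\ge 2$. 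For the reverse inclusion, we use that $\mathbb{K}[\Gamma^{2}_{\mathbf{a},\mathbf{d},h}]$ is Cohen--Macaulay; since $I_2(P)\subseteq I_{\Gamma}$, Theorem~\ref{Gastinger} reduces everything to proving
\[
\dim_{\mathbb{K}}\frac{\mathbb{K}[x_1,\dots,x_{n+1}]}{I_2(P)+\langle x_1,x_{n+1}\rangle}=\abs{\mathrm{Ap}(\Gamma^{2}_{\mathbf{a},\mathbf{d},h},E)},
\]
where $x_1,x_{n+1}$ are the variables mapping to the extremal rays $\mathbf{a}$ and $h\mathbf{a}+n\mathbf{d}$; by Theorem~\ref{Apery} the right-hand side equals $n$.

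To evaluate the left-hand side I would set $x_1=x_{n+1}=0$, so that $P$ specializes to
\[
\bar P=\begin{pmatrix}0&x_2&x_3&\cdots&x_{n-1}&x_n\\ x_2&x_3&x_4&\cdots&x_n&0\end{pmatrix},
\]
and the quotient becomes $\mathbb{K}[x_2,\dots,x_n]/I_2(\bar P)$. The minors of $\bar P$ involving its first or last column give $x_2x_k\in I_2(\bar P)$ for $2\le k\le n$ and $x_kx_n\in I_2(\bar P)$ for $3\le k\le n$, while the remaining minors give the Hankel relations $x_jx_{k+1}-x_{j+1}x_k\in I_2(\bar P)$ for $2\le j<k\le n-1$. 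The claim is that $I_2(\bar P)$ contains \emph{every} quadratic monomial $x_ix_j$: given $3\le i\le j\le n-1$, successive applications of the Hankel relation in the form $x_ix_j\equiv x_{i-1}x_{j+1}\pmod{I_2(\bar P)}$ lower the smaller index and raise the larger one while fixing the weight $i+j$, and after $\min(i-2,\,n-j)$ steps one lands on a monomial of the form $x_2x_m$ (when $i+j\le n+2$) or $x_mx_n$ (when $i+j\ge n+2$), both of which already lie in $I_2(\bar P)$. Granting this, and since $I_2(\bar P)$ is homogeneous and generated in degree $2$, the quotient $\mathbb{K}[x_2,\dots,x_n]/I_2(\bar P)$ has $\mathbb{K}$-basis $\{1,x_2,\dots,x_n\}$ and hence dimension exactly $n$, so Theorem~\ref{Gastinger} yields $I_{\Gamma}=I_2(P)$.

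For the resolution I would regard $P$ as a map $\alpha\colon A^{n}\to A^{2}$ over $A=\mathbb{K}[x_1,\dots,x_{n+1}]$, so $I_2(P)$ is the ideal of maximal minors and $f=n$, $g=2$ in the notation of Theorem~\ref{Eagon-Northcott}. Since $A/I_{\Gamma}=\mathbb{K}[\Gamma^{2}_{\mathbf{a},\mathbf{d},h}]$ has Krull dimension $2$ (the dimension of the semigroup) while $A$ has dimension $n+1$, the ideal $I_{\Gamma}$ has height $n-1=f-g+1$, and as $A$ is Cohen--Macaulay this coincides with $\operatorname{grade}(I_{\Gamma})$. Hence Theorem~\ref{Eagon-Northcott} applies and the Eagon--Northcott complex of $\alpha$ is a free resolution of $A/I_2(P)=\mathbb{K}[\Gamma^{2}_{\mathbf{a},\mathbf{d},h}]$. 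For minimality, grade $A$ by $\mathbb{Z}^{2}$ via $\deg x_1=\mathbf{a}$ and $\deg x_{i+1}=h\mathbf{a}+i\mathbf{d}$; then $I_{\Gamma}$ is homogeneous and the complex is $\mathbb{Z}^{2}$-graded, and every entry of every differential is, up to sign, either an entry $x_1^{h},x_2,\dots,x_n$ of $P$ or a $2\times 2$ minor of $P$ — all homogeneous of nonzero degree, hence lying in the graded maximal ideal. Therefore the Eagon--Northcott complex is a minimal graded free resolution of $\mathbb{K}[\Gamma^{2}_{\mathbf{a},\mathbf{d},h}]$.

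The step I expect to be the main obstacle is the quadratic-monomial claim in the second paragraph: one must verify that the successive Hankel moves stay inside the range $[2,n-1]$ where the relation is available, and that the iteration genuinely terminates on a first- or last-column monomial rather than stalling on an interior adjacent pair $x_cx_{c+1}$. A helpful bookkeeping check is that the $\binom{n}{2}$ minors of $\bar P$ listed above — namely $n-1$ of type $x_2x_k$, $n-2$ of type $x_kx_n$, and $\binom{n-2}{2}$ Hankel binomials — are exactly $\dim_{\mathbb{K}}\mathbb{K}[x_2,\dots,x_n]_2$ in number, so their $\mathbb{K}$-linear independence is an equivalent way of phrasing $(\mathbb{K}[x_2,\dots,x_n]/I_2(\bar P))_{\ge 2}=0$.
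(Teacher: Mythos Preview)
Your argument is correct. For the equality $I_{\Gamma}=I_2(P)$ you follow the same route as the paper, namely the Ap\'ery--set criterion of Theorem~\ref{Gastinger}; the paper simply asserts that the image of $I_2(P)$ modulo $\langle x_1,x_{n+1}\rangle$ is the square of $(x_2,\dots,x_n)$, whereas you supply the Hankel--shift argument that actually justifies this (your worry about ``stalling'' on an adjacent pair $x_cx_{c+1}$ is unfounded: the relation $x_cx_{c+1}\equiv x_{c-1}x_{c+2}$ is available whenever $3\le c$ and $c+1\le n-1$, so the iteration always proceeds until one index hits $2$ or $n$).

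Where you genuinely diverge is in computing $\operatorname{grade}(I_{\Gamma})$. The paper builds an explicit regular sequence $\{x_1^hx_{n+1}-x_2x_n\}\cup\{x_{i-1}x_{i+1}-x_i^2:3\le i\le n\}\subset I_{\Gamma}$ of length $n-1$, verified by checking that the leading terms under a suitable monomial order are pairwise coprime, and then sandwiches the grade between $n-1$ and the height. You instead read off $\operatorname{ht}(I_{\Gamma})=(n+1)-\dim\mathbb{K}[\Gamma]=n-1$ directly from the dimension of the semigroup and invoke Cohen--Macaulayness of the polynomial ring to equate grade and height. Your route is shorter and more conceptual; the paper's route is self-contained in that it does not appeal to the Krull dimension of $\mathbb{K}[\Gamma]$. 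You also make the minimality of the Eagon--Northcott complex explicit (entries of the differentials lie in the graded maximal ideal), which the paper leaves implicit. One small slip: the entries of $P$ are $x_1^h,x_2,\dots,x_{n+1}$, not just up to $x_n$; this does not affect the argument.
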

\begin{proof}
It is clear that $I_{2}(P)\subseteq \mathrm{ker}(\phi)$. Consider the ideal 
$$J= I_{2}(P)+\langle x_{1},x_{n+1}\rangle =\big\langle\cup_{j=3}^{n}\{x_{l}x_{j}\vert 2 \leq l \leq j-1\}\cup \{x_{l}^{2}\vert 2\leq l \leq n\}\big\rangle 
+  \langle x_{1},x_{n+1} \rangle.$$
\noindent Then, $\frac{A}{J}=\frac{\mathbb{K}[x_{2},\dots,x_{n}]}{J'}$, 
where $J'=\big\langle\cup_{j=3}^{n}\{x_{l}x_{j}\vert 2 \leq l \leq j-1\}\cup \{x_{l}^{2}\vert 2\leq l \leq n\}\big\rangle$. This shows that $\{1,x_{2},\dots,x_{n-1}\}$ gives a $\mathbb{K}$-basis of $\frac{\mathbb{K}[x_{2},\dots,x_{n}]}{J'}$. From Theorem \ref{Apery}, $\vert \mathrm{Ap}(\Gamma^{2}_{\mathbf{a},\mathbf{d},h},E)\vert=n$. By Theorem \ref{Gastinger}, we have $I_{2}(P)=\mathrm{ker}(\phi)$.
\medskip

Let us consider the set of polynomials $T=\{-x_{i}^{2}+x_{i-1}x_{i+1}\mid 3\leq i\leq n\}\cup\{x_{1}^{h}x_{n+1}-x_{2}x_{n}\}\subset I_{\Gamma^{2}_{\mathbf{a},\mathbf{d},h}}$; 
we claim that $T$ forms a regular sequence in $ \mathbb{K}[x_{1},\dots,x_{n+1}]$. We consider the the degree reverse lexicographic ordering induced by $x_{n+1}>x_{n}>\dots>x_{3}>x_{1}>x_{2}$ and with respect to 
this monomial order $\mathrm{LT}(T)=\{x_{3}^{2},\dots,x_{n}^{2},x_{1}^{h}x_{n+1}\}$. Since elements of 
$\mathrm{LT}(T)$ are the mutually coprime by \cite[Lemma 2.2]{Saha-Regular}, the set $T$ forms a regular 
sequence in $\mathbb{K}[x_{1},\dots,x_{n+1}]$. Since $T\subset I_{\Gamma^{2}_{\mathbf{a},\mathbf{d},h}}$, 
we have  $n-1\leq \mathrm{grade}\,( I_{\Gamma^{2}_{\mathbf{a},\mathbf{d},h}})\leq \mathrm{ht}\, (I_{\Gamma^{2}_{\mathbf{a},\mathbf{d},h}}) \leq (n+1-2)(2+1-2)=n-1$. Therefore, by Theorem \ref{Eagon-Northcott}, 
a minimal graded resolution of $I_{\Gamma^{2}_{\mathbf{a},\mathbf{d},h}}$  is given by the Eagon-Northcott 
complex of $P$.
\end{proof}

\begin{theorem}\label{Grobner basis}
The set $G=\{x_{i+1}x_{j}-x_{i}x_{j+1},x_{2}x_{l}-x_{1}^{h}x_{l+1} \mid 2 \leq i \leq n-2, i+1 \leq j \leq n-1, 2\leq l \leq n-1\}$ forms a Gr\"{o}bner basis of the defining ideal 
$I_{\Gamma^{2}_{\mathbf{a},\mathbf{d},h}}$, with respect to the negative degree 
reverse lexicographic ordering induced by $x_{2}>x_{3}>\dots>x_{n}>x_{1}>x_{n+1}$. 
\end{theorem}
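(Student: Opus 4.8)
The plan is to compute $\mathrm{in}_{>}(I_{\Gamma^{2}_{\mathbf{a},\mathbf{d},h}})$ explicitly, to show it equals the monomial ideal $(x_{2},\dots,x_{n})^{2}$, and to observe that this is precisely the ideal of leading monomials of $G$; then $G$ is automatically a Gr\"{o}bner basis (and, in particular, generates $I_{\Gamma^{2}_{\mathbf{a},\mathbf{d},h}}$). Write $I:=I_{\Gamma^{2}_{\mathbf{a},\mathbf{d},h}}$, $A=\mathbb{K}[x_{1},\dots,x_{n+1}]$, $\mathfrak{n}=(x_{2},\dots,x_{n})A$, and $>$ for the order in the statement; note that the extremal rays $\mathbf{a}$ and $h\mathbf{a}+n\mathbf{d}$ of $\Gamma^{2}_{\mathbf{a},\mathbf{d},h}$ correspond to the variables $x_{1}$ and $x_{n+1}$.

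First I would identify the leading monomials of $G$. Every element of $G$ is a $2\times 2$ minor of the matrix $P$ of Theorem \ref{Eagon-resolution} — the relations $x_{2}x_{l}-x_{1}^{h}x_{l+1}$ are the minors on the first column and column $l$, the relations $x_{i+1}x_{j}-x_{i}x_{j+1}$ the minors on columns $i$ and $j$ — so $G\subseteq I_{2}(P)=I$. Since $>$ is \emph{negative} in the degree, the leading term of a binomial is its term of smaller total degree; hence the monomials $x_{1}^{h}x_{l+1}$, of degree $h+1$, are never leading, so $\mathrm{LM}(x_{2}x_{l}-x_{1}^{h}x_{l+1})=x_{2}x_{l}$ (when $h=1$ both terms have degree $2$, and the reverse--lexicographic tie--break — in which $x_{n+1}$ is the smallest variable, then $x_{1}$ — again selects $x_{2}x_{l}$), while for the two quadratic terms of $x_{i+1}x_{j}-x_{i}x_{j+1}$ that tie--break selects the ``anti-diagonal'' term $x_{i+1}x_{j}$. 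Collecting these, $\langle\mathrm{LM}(G)\rangle=\langle x_{2}x_{l}:2\le l\le n\rangle+\langle x_{i+1}x_{j}:2\le i<j\le n\rangle=(x_{2},\dots,x_{n})^{2}=\mathfrak{n}^{2}$, so in particular $\mathfrak{n}^{2}\subseteq\mathrm{in}_{>}(I)$.

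It remains to prove $\mathrm{in}_{>}(I)\subseteq\mathfrak{n}^{2}$, which I would do by a Hilbert--series comparison. Since $\mathbb{K}[\Gamma^{2}_{\mathbf{a},\mathbf{d},h}]$ is a Cohen--Macaulay simplicial affine semigroup ring whose extremal rays correspond to $x_{1},x_{n+1}$, it is a free module over the polynomial subring $\mathbb{K}[\overline{x_{1}},\overline{x_{n+1}}]\subseteq A/I$ (a polynomial ring in two variables) with basis indexed by $\mathrm{Ap}(\Gamma^{2}_{\mathbf{a},\mathbf{d},h},E)$; by Theorem \ref{Apery} this Ap\'{e}ry set has $n$ elements, with associated basis elements $\overline{1},\overline{x_{2}},\dots,\overline{x_{n}}$, so $A/I$ is free of rank $n$ over $\mathbb{K}[\overline{x_{1}},\overline{x_{n+1}}]$ on $\overline{1},\overline{x_{2}},\dots,\overline{x_{n}}$. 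The ring $A/\mathfrak{n}^{2}$ is visibly free over $\mathbb{K}[\overline{x_{1}},\overline{x_{n+1}}]$ on the same set, so $A/I$ and $A/\mathfrak{n}^{2}$ have the same Hilbert series for the positive grading of $A$ pulled back from the standard grading via $\phi$, a grading for which both $I$ and $\mathfrak{n}^{2}$ are homogeneous. Passing to the initial ideal preserves the Hilbert series of a homogeneous ideal (a Gr\"{o}bner basis of a homogeneous ideal is homogeneous, so the standard monomials modulo $\mathrm{in}_{>}(I)$ form a homogeneous $\mathbb{K}$--basis of $A/I$), hence $A/\mathrm{in}_{>}(I)$ has the same Hilbert series as $A/\mathfrak{n}^{2}$. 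But $\mathfrak{n}^{2}\subseteq\mathrm{in}_{>}(I)$ yields a graded surjection $A/\mathfrak{n}^{2}\twoheadrightarrow A/\mathrm{in}_{>}(I)$, and a graded surjection of modules with equal Hilbert series is an isomorphism. Therefore $\mathrm{in}_{>}(I)=\mathfrak{n}^{2}=\langle\mathrm{LM}(G)\rangle$, which is the assertion.

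A more hands-on route is to verify Buchberger's (Mora's) criterion: all S--polynomials reduce to $0$. The S--pairs among the ``Hankel'' relations reduce to $0$ by the classical straightening law for the $2\times 2$ minors of a $2\times m$ matrix under an anti-diagonal term order (these relations are the $2\times 2$ minors of the Hankel submatrix of $P$ formed by columns $2,\dots,n$, with rows $(x_{2},\dots,x_{n})$ and $(x_{3},\dots,x_{n+1})$), while an S--pair involving one of the relations $x_{2}x_{l}-x_{1}^{h}x_{l+1}$ collapses, after one or two reductions, to $x_{1}^{h}$ times a Hankel relation from $G$ or to $x_{n+1}$ times an $x_{1}^{h}$-relation from $G$, and hence to $0$. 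I expect the main difficulty to be purely organisational: pinning down $\mathrm{LM}$ for this somewhat unusual local order — in particular the boundary indices $l=n$, $j=n$ and the case split $h=1$ versus $h\ge 2$ — and, on the computational route, carrying out reductions within Mora's standard-basis algorithm rather than the ordinary division algorithm. For that reason I would present the Hilbert--series argument, which uses only Theorem \ref{Apery} and the Cohen--Macaulayness of $\mathbb{K}[\Gamma^{2}_{\mathbf{a},\mathbf{d},h}]$, as the clean proof.
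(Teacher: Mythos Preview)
Your Hilbert-series route is correct and genuinely different from the paper's. The paper verifies Buchberger's criterion directly: it names $f_{l}=x_{2}x_{l}-x_{1}^{h}x_{l+1}$ and $f_{i,j}=x_{i+1}x_{j}-x_{i}x_{j+1}$ and then works through every type of $S$-pair ($S(f_{l},f_{i,j})$ in four sub-cases, $S(f_{l_{1}},f_{l_{2}})$, and $S(f_{i_{1},j_{1}},f_{i_{2},j_{2}})$ in three sub-cases), checking each reduces to zero modulo $G$. Your argument sidesteps all of this bookkeeping: freeness of $A/I$ over $\mathbb{K}[\overline{x_{1}},\overline{x_{n+1}}]$ on $\{1,\overline{x_{2}},\dots,\overline{x_{n}}\}$ (via Theorem~\ref{Apery} and Cohen--Macaulayness) forces $\mathrm{in}_{>}(I)=(x_{2},\dots,x_{n})^{2}$, and then one only has to observe that the leading monomials of $G$ already cover these quadrics. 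The paper's approach is self-contained but computational; yours leverages results already proved in the paper, avoids $S$-polynomials entirely, and yields the explicit description $\mathrm{in}_{>}(I)=(x_{2},\dots,x_{n})^{2}$ as a by-product. Your secondary sketch via Buchberger is essentially the paper's own proof.

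One point to flag: when you collect leading monomials you let $l$ run to $n$ and $j$ to $n$, but the statement only allows $l\le n-1$ and $j\le n-1$. With those literal bounds $\langle\mathrm{LM}(G)\rangle=(x_{2},\dots,x_{n-1})^{2}\subsetneq(x_{2},\dots,x_{n})^{2}$; for instance $x_{n}^{2}=\mathrm{LM}(x_{n}^{2}-x_{n-1}x_{n+1})\in\mathrm{in}_{>}(I)$ is not hit, so your own argument would actually show that $G$ as literally written is \emph{not} a Gr\"{o}bner basis. This is almost certainly an indexing typo in the theorem (the matrix $P$ has $\binom{n}{2}$ minors, not the $\binom{n-1}{2}$ elements that $G$ contains as stated, and the paper itself uses the full range $2\le i_{1}\le j_{1}\le n_{1}$ when describing $\mathrm{in}_{<}(I_{\mathfrak{S}_{2}})$ in the proof of Theorem~\ref{Property-Semigroup}), but you should say so explicitly rather than silently enlarge $G$.
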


\begin{proof}
We consider the $S$-polynomial of every distinct pair of elements of $G$ and show that it reduces 
to zero after division by $G$. Let $f_{l}=x_{2}x_{l}-x_{1}^{h}x_{l+1}, f_{i,j}=x_{i+1}x_{j}-x_{i}x_{j+1}$ 
and $\mathrm{LM}(f_{l})=x_{2}x_{l}, \mathrm{LM}(f_{i,j})= x_{i+1}x_{j}$.

\begin{enumerate}[(1)]
\item If $l\neq i+1,l=j$;
 \begin{align*}
 S(f_{l},f_{i,j})=&x_{i+1}(x_{2}x_{l}-x_{1}^{h}x_{l+1})-x_{2}(x_{i+1}x_{l}-x_{i}x_{l+1})\\
 =& x_{2}x_{i}x_{l+1}-x_{1}^{h}x_{l+1}x_{i+1}\\
 =& x_{l+1}(x_{2}x_{i}-x_{1}^{h}x_{i+1})\\
 =& x_{l+1}f_{i}.
 \end{align*}
 
\item If $l\neq i+1, l\neq j$, 
then $\mathrm{gcd}(\mathrm{LM}(f_{l}),\mathrm{LM}(f_{i,j}))=1$. Hence, $S(f_{l},f_{i,j})$ reduces to zero.  

\item  If $l= i+1, l= j$;
\begin{align*}
 S(f_{l},f_{i,j})=&x_{l}(x_{2}x_{l}-x_{1}^{h}x_{l+1})-x_{2}(x_{l}^{2}-x_{i}x_{j+1})\\
 =&x_{2}x_{i}x_{j+1}-x_{1}^{h}x_{l}x_{l+1}\\
 =& x_{j+1}(x_{2}x_{i}-x_{1}^{h}x_{i+1})=x_{j+1}f_{i}.
\end{align*}

\item  If $l= i+1, l\neq j$;
\begin{align*}
 S(f_{l},f_{i,j})=&x_{j}(x_{2}x_{l}-x_{1}^{h}x_{l+1})-x_{2}(x_{l}x_{j}-x_{i}x_{j+1})\\
 =& x_{2}x_{i}x_{j+1}-x_{1}^{h}x_{l+1}x_{j}\\
 =& x_{j+1}(x_{2}x_{i}-x_{1}^{h}x_{i+1})-x_{1}^{h}(x_{l+1}x_{j}-x_{l}x_{j+1}).
 \end{align*}
 
\item  If $l_{1}\neq l_{2}$; 
\begin{align*}
 S(f_{l_{1}},f_{l_{2}})=&x_{l_{2}}(x_{2}x_{l_{1}}-x_{1}^{h}x_{l_{1}+1})-x_{l_{1}}(x_{2}x_{l_{2}}-x_{1}^{h}x_{l_{2}+1})\\
 =& x_{1}^{h}(x_{l_{1}}x_{l_{2}+1}-x_{l_{1}+1}x_{l_{2}}).
\end{align*}

\item  Let $f_{i_{1},j_{1}}=(x_{i_{1}+1}x_{j_{1}}-x_{i_{1}}x_{j_{1}+1}),
 f_{i_{2},j_{2}}=(x_{i_{2}+1}x_{j_{2}}-x_{i_{2}}x_{j_{2}+1})$. 
 
\medskip

\begin{enumerate}[(a)]
\item If $i_{1}\neq i_{2}, j_{1}\neq j_{2}$;
\begin{align*}
\quad \quad
S(f_{i_{1},j_{1}},f_{i_{2},j_{2}})=&x_{i_{2}}(x_{i_{1}+1}x_{j_{1}}-x_{i_{1}}x_{j_{1}+1})-x_{i_{1}}(x_{i_{2}+1}x_{j_{2}}-x_{i_{2}}x_{j_{2}+1})\\
=&x_{j_{1}}(x_{i_{1}+1}x_{i_{2}}-x_{i_{1}}x_{i_{2}+1}).
 \end{align*}
 
\item  If $i_{1}\neq i_{2}, j_{1}\neq j_{2}$, then
$\mathrm{gcd}(\mathrm{LM}(f_{i_{1},j_{1}},\mathrm{LM}(f_{i_{2},j_{2}})=1$. Hence $S(f_{i_{1},j_{1}},f_{i_{2},j_{2}})$ reduces to zero. 

\item If $i_{1}= i_{2}, j_{1}\neq j_{2}$, then
\begin{align*}
\quad \quad S(f_{i_{1},j_{1}},f_{i_{2},j_{2}})=&x_{j_{2}}((x_{i_{1}+1}x_{j_{1}}-x_{i_{1}}x_{j_{1}+1})-x_{j_{1}}(x_{i_{2}+1}x_{j_{2}}-x_{i_{2}}x_{j_{2}+1})\\
=& x_{i_{1}}(x_{j_{1}}x_{j_{2}+1}-x_{j_{2}}x_{j_{1}+1}).
\end{align*} 
\end{enumerate}

\end{enumerate}
\noindent Each $S$-polynomial reduces to zero, therefore, by Buchberger’s Criterion, $G=\{x_{i+1}x_{j}-x_{i}x_{j+1},x_{2}x_{l}-x_{1}^{h}x_{l+1} \mid 2 \leq i \leq n-2, i+1 \leq j \leq n-1, 2\leq l \leq n-1\}$ forms a Gr\"{o}bner basis of the defining ideal $I_{\Gamma^{2}_{\mathbf{a},\mathbf{d},h}}$, 
with respect to the negative degree reverse lexicographic ordering induced by 
$x_{2}>x_{3}>\dots>x_{n-1}>x_{1}>x_{n}$.
\end{proof}

\begin{corollary}
The minimal generating set of the initial ideal of $I_{\Gamma^{2}_{\mathbf{a},\mathbf{d},h}}$, 
with respect to the monomial ordering defined in Theorem \ref{Grobner basis}, is given by the 
set 
$G(in_{<}(I_{\Gamma^{2}_{\mathbf{a},\mathbf{d},h}})=\{x_{i+1}x_{j},x_{2}x_{l} \mid 2 \leq i \leq n-2, i+1 \leq j \leq n-1, 2\leq l \leq n-1\}$.
\end{corollary}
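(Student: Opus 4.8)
The plan is to deduce this immediately from Theorem~\ref{Grobner basis}, using the elementary principle that a monomial ideal generated by monomials all of the same degree is minimally generated by exactly those monomials once repetitions are removed.

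First I would apply Theorem~\ref{Grobner basis}: since $G=\{\,x_{i+1}x_j-x_ix_{j+1},\ x_2x_l-x_1^hx_{l+1}\mid 2\le i\le n-2,\ i+1\le j\le n-1,\ 2\le l\le n-1\,\}$ is a Gr\"obner basis of $I_{\Gamma^2_{\mathbf a,\mathbf d,h}}$ with respect to the ordering fixed there, the initial ideal is generated by the leading monomials of the elements of $G$, which by the computation of $\mathrm{LM}(f_l)$ and $\mathrm{LM}(f_{i,j})$ in that proof are precisely $x_{i+1}x_j$ and $x_2x_l$ for the indicated ranges. Hence $\mathrm{in}_<(I_{\Gamma^2_{\mathbf a,\mathbf d,h}})=\langle x_{i+1}x_j,\ x_2x_l\rangle$, and it only remains to check that this set of generators is a minimal generating set.

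Next I would observe that every monomial in the list has degree $2$, and that for degree-$2$ monomials $m_1,m_2$ one has $m_1\mid m_2$ exactly when $m_1=m_2$; therefore the minimal monomial generating set is obtained from the displayed list simply by discarding repetitions. So it suffices to verify that the listed monomials are pairwise distinct. This is a short case check: no $x_2x_l$ equals any $x_{i+1}x_j$, since $i+1\ge 3$ and $j\ge i+1\ge 3$ force $x_2\nmid x_{i+1}x_j$; two monomials $x_{i+1}x_j$ and $x_{i'+1}x_{j'}$ with $(i,j)\ne(i',j')$ differ because comparing the smaller-index variable forces $i+1=i'+1$ and then the larger forces $j=j'$ (the square case $j=i+1$ behaving the same way); and distinct values of $l$ clearly give distinct $x_2x_l$. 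This completes the argument.

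There is essentially no obstacle here: the entire content of the corollary is already contained in Theorem~\ref{Grobner basis}, and the remaining step is just the bookkeeping above. The one point I would state explicitly is the general fact that a monomial ideal generated by monomials of a single degree is minimally generated precisely by that set of monomials (without repetition); this makes the minimality of $G(\mathrm{in}_<(I_{\Gamma^2_{\mathbf a,\mathbf d,h}}))$ automatic.
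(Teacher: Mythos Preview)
Your proposal is correct and follows the same approach as the paper, which simply states that the corollary ``easily follows from Theorem~\ref{Grobner basis}.'' You have merely made explicit the routine verification of minimality (all generators have degree~$2$ and are pairwise distinct) that the paper leaves to the reader.
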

\begin{proof}
Easily follows from Theorem \ref{Grobner basis}.
\end{proof}

\begin{definition}
\begin{enumerate}[(i)]
\item {\rm Define
$\mathrm{supp}(T)=\{i \mid z_{i} \text{ divides $m$, for some } m \in T\}$, where $(\emptyset\neq )T\subset \mathrm{Mon}(A)$, $A=\mathbb{K}[z_{1},\dots,z_{r+n}]$ and $\mathrm{Mon}(A)$ is a set of monomials in $A$. }
\item {\rm Ideals $I$ and $J$ of $A$ intersect transversally  if and only if $I \cap J=IJ$.}
\end{enumerate}
\end{definition}

\begin{theorem}\label{Property-Semigroup}
Let $I_{\mathfrak{S}_{2}}$ be a defining ideal of $\mathbb{K}[\mathfrak{S}_{2}]$. 
\begin{enumerate}
\item $I_{\mathfrak{S}_{2}}=I_{2}(P_{1})+I_{2}(P_{2})$, where 
$$P_{1}=\begin{pmatrix}
x_{1}^{h} &x_{2}&\cdots&x_{n_{1}}\\
x_{2}&x_{3}&\cdots&x_{n_{1}+1}
\end{pmatrix}, \quad P_{2}=\begin{pmatrix}
x_{n_{1}+2}^{h} &x_{n_{1}+3}&\cdots&x_{n_{2}}\\
x_{n_{1}+3}&x_{n_{1}+4}&\cdots&x_{n_{2}+1}
\end{pmatrix}.$$

\item Maximal minors of the matrices 
$P_{1}$ and $P_{2}$ form a minimal Gr\"{o}bner basis of $I_{\mathfrak{S}_{2}}$, 
with respect to the negative degree lexicographic ordering induced by 
$x_{2}>x_{3}>\dots>x_{n_{1}}>x_{n_{1}+3}>\dots >x_{n_{2}} >x_{1}>x_{n_{1}+1}>x_{n_{1}+2}>x_{n_{2}+1}$.

\item $\mathbb{K}[\mathfrak{S}_{2}]$ is Koszul.

\item The tensor product of the 
Eagon-Northcott complexes of $P_{1}$ and $P_{2}$ give a minimal graded free 
resolution of $I_{\mathfrak{S}_{2}}$
\end{enumerate} 
\end{theorem}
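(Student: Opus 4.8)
The plan is to deduce all four assertions from the general ``join'' machinery of Section~4 (Theorem~\ref{Generator-Concatanation} and its corollary) together with the two structural facts about a single generalized arithmetic sequence proved earlier: that its defining ideal is $I_2(P)$ for the matrix $P$ of Theorem~\ref{Eagon-resolution}, and that the maximal minors of $P$ form a Gr\"obner basis with quadratic leading terms for a negative degree reverse lexicographic order (Theorem~\ref{Grobner basis} and the Corollary following it). The starting point is that, by construction, $\mathfrak{S}_2=\langle S_1\rangle\sqcup\langle S_2\rangle$ is the join of the two generalized arithmetic sequence semigroups $\langle S_i\rangle$; since $\{\mathbf{a}_i,\mathbf{d}_i\}$ and $\{\mathbf{a}_i,h\mathbf{a}_i+n_i\mathbf{d}_i\}$ span the same $2$-dimensional subspace, the hypothesis that $\{\mathbf{a}_1,\mathbf{d}_1,\mathbf{a}_2,\mathbf{d}_2\}$ is linearly independent over $\mathbb{Q}$ is precisely the linear independence of the extremal rays required in Definition~\ref{Join}, so $\mathfrak{S}_2$ is a join in the sense of that definition and Theorem~\ref{Generator-Concatanation} applies.

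For (1), Theorem~\ref{Generator-Concatanation} gives $I_{\mathfrak{S}_2}=I_{\langle S_1\rangle}A_{12}+I_{\langle S_2\rangle}A_{12}$, and applying Theorem~\ref{Eagon-resolution} to each $\langle S_i\rangle$ separately identifies $I_{\langle S_i\rangle}$ with $I_2(P_i)$ inside the corresponding block of variables; adding these yields $I_{\mathfrak{S}_2}=I_2(P_1)+I_2(P_2)$. For (2), Theorem~\ref{Grobner basis} and its Corollary show that, for each $i$, the maximal minors of $P_i$ form a minimal Gr\"obner basis of $I_2(P_i)$ for the negative degree reverse lexicographic order on the $i$-th block of variables, with leading monomials the quadratic monomials $x_{i+1}x_j$ and $x_2x_l$. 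I would then check that the union of these two bases is a minimal Gr\"obner basis of $I_2(P_1)+I_2(P_2)$ for the combined order in the statement: an $S$-polynomial of two minors coming from the same $P_i$ reduces to zero by Theorem~\ref{Grobner basis}, while an $S$-polynomial of a minor of $P_1$ and a minor of $P_2$ has leading monomials supported on disjoint sets of variables, hence coprime, so it reduces to zero by Buchberger's coprimality criterion; minimality holds because within each block the basis is minimal and across blocks the leading monomials involve disjoint variables, so no leading monomial of one element divides that of another.

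Parts (3) and (4) are then short. For (3), part~(2) shows $\mathrm{in}_{<}(I_{\mathfrak{S}_2})$ is generated by quadratic monomials; since $<$ is a negative (local) degree order, $A_{12}/\mathrm{in}_{<}(I_{\mathfrak{S}_2})\cong\mathrm{gr}_{\mathfrak{m}}(\mathbb{K}[\mathfrak{S}_2])$, and a quadratic monomial ideal defines a Koszul algebra, so $\mathrm{gr}_{\mathfrak{m}}(\mathbb{K}[\mathfrak{S}_2])$ is Koszul and hence $\mathbb{K}[\mathfrak{S}_2]$ is Koszul. For (4), Theorem~\ref{Eagon-resolution} says that the Eagon--Northcott complex of $P_i$ is a minimal graded free resolution of $\mathbb{K}[\langle S_i\rangle]=A_i/I_2(P_i)$ (the grade of $I_2(P_i)$ equals its height $n_i-1$, by the regular sequence argument in that proof, so the complex is acyclic by Theorem~\ref{Eagon-Northcott}), and Theorem~\ref{Generator-Concatanation} says the $\mathbb{K}$-tensor product of minimal graded free resolutions of $\mathbb{K}[\langle S_1\rangle]$ and $\mathbb{K}[\langle S_2\rangle]$ is a minimal graded free resolution of $\mathbb{K}[\mathfrak{S}_2]$, the minimality coming from the fact that the two ideals live in disjoint sets of variables. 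Therefore the tensor product of the Eagon--Northcott complexes of $P_1$ and $P_2$ is a minimal graded free resolution of $\mathbb{K}[\mathfrak{S}_2]$, equivalently of $I_{\mathfrak{S}_2}$.

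The point requiring most care is (2): one must confirm that the monomial order displayed in the statement restricts, on each block of variables, to an order equivalent (for computing leading terms of $I_2(P_i)$) to the one used in Theorem~\ref{Grobner basis}, and in particular that the mixed-degree minor $x_1^hx_{l+1}-x_2x_l$ has leading monomial the quadratic $x_2x_l$, which is exactly where the ``negative degree'' feature of the order is used. Once this bookkeeping is set up, the $S$-polynomial computations either coincide with those already carried out in Theorem~\ref{Grobner basis} or vanish by coprimality, and the remaining assertions follow formally from the join results.
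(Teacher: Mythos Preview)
Your proposal is correct and follows essentially the same approach as the paper: the paper itself states that parts~(1) and~(4) follow from Theorem~\ref{Generator-Concatanation} and Theorem~\ref{Eagon-resolution} exactly as you argue, and for~(2) and~(3) invokes Theorem~\ref{Grobner basis}, the disjointness of the two blocks of variables, and the fact that the resulting initial ideal is generated by quadratic monomials. The only notable difference is that for~(1) and~(4) the paper also supplies alternative self-contained proofs (via the Ap\'ery set count of Theorem~\ref{Apery-Concatanation} together with Theorem~\ref{Gastinger} for~(1), and via transversal intersection of $I_2(P_1)$ and $I_2(P_2)$ for~(4)), and for~(3) it cites the Koszul criterion directly rather than passing through $\mathrm{gr}_{\mathfrak{m}}$; but these are presented as second proofs, and your route through the join machinery is precisely the one the paper endorses first.
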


\begin{proof}

\noindent (1) The proof Follows from Theorem \ref{Generator-Concatanation}. Let us 
give an another proof. It is clear that $I_{2}(P_{1})+I_{2}(P_{2})\subseteq I_{\mathfrak{S}_{2}}$. Consider 
the ideal 
\begin{align*}
J=&I_{2}(P_{1})+I_{2}(P_{2}) +\langle x_{1},x_{n_{1}+1},x_{n_{1}+2},x_{n_{2}+1} \rangle\\
=&\big\langle\cup_{j=3}^{n_{1}}\{x_{l}x_{j}\vert 2 \leq l \leq j-1\}\cup \{x_{l}^{2}\vert 2\leq l \leq n_{1}\}
 \big\rangle + \langle x_{1},x_{n_{1}+1} \rangle +  \\
& \big\langle\cup_{j=n_{1}+3}^{n_{2}}\{x_{l}x_{j}\vert 2 \leq l \leq j-1\}\cup \{x_{l}^{2}\vert (n_{1}+3)\leq l \leq n_{2}\} \big\rangle + \langle x_{n_{1}+2},x_{n_{2}+1} \rangle.
\end{align*}
Then, $\{1,x_{2},\dots,x_{n_{1}},x_{n_{1}+3},\dots,x_{n_{2}}\}\cup \{x_{l_{1}}x_{l_{2}}\vert 2 \leq l_{1} \leq n_{1}, n_{1}+3 \leq l_{2} \leq n_{2}\}$ is a $\mathbb{K}$-basis of 
$\frac{\mathbb{K}[x_{2},\dots,x_{n}]}{J'}$,
and hence $ \mathrm{dim}_{\mathbb{K}}\frac{\mathbb{K}[x_{1},\dots,x_{n}]}{J+\langle x_{1},\dots,x_{r}\rangle}=\vert \mathrm{Ap}(\Gamma_2,E_{c})\vert$. By Theorem \ref{Apery-Concatanation} and Theorem \ref{Gastinger}, we have $I_{2}(P_{1})+I_{2}(P_{2})= I_{\mathfrak{S}_{2}}$.
\medskip

\noindent (2) Since the set of maximal minors of $P_{1}, P_{2}$ are disjoint, using Theorem \ref{Grobner basis}, 
we can prove that they form a minimal Gr\"{o}bner basis with respect to the said monomial order. 
Therefore, we have the required Gr\"{o}bner basis with respect to given monomial ordering.   
\medskip

\noindent (3) By Theorem \ref{Grobner basis}, the initial ideal of $I_{\mathfrak{S}_{2}}$, 
with respect to the negative degree reverse lexicographic ordering induced by 
$x_{1}>\dots>x_{n_{1}+1}>x_{n_{1}+2}>\dots>x_{n_{2}+1}$, is given by 
$in_{<}(I_{\mathfrak{S}_{2}})= \big\langle \{x_{i_{1}}x_{j_{1}} \vert 2 \leq i_{1} \leq n_{1}, i_{1} \leq j_{1} \leq n_{1}\}\cup \{x_{i_{2}}x_{j_{2}} \vert n_{1}+3 \leq i_{2} \leq n_{2}, i_{2} \leq j_{2} \leq n_{2}\}\big\rangle$.
Since all the element of $in_{<}(I_{\mathfrak{S}_{2}})$ are of degree $2$ with respect to 
the standard grading, therefore, by \cite[Corollary 3.14]{Conca-Determinantal}, 
we have the Koszul property of $\mathbb{K}[\mathfrak{S}_{2}]$.
\medskip

\noindent (4) We note that the proof follows from Theorem \ref{Generator-Concatanation}. 
Here we give an another proof using transversal intersection. By the proof of Theorem 
\ref{Property-Semigroup}(3), we have $\mathrm{supp}(in_{<}(I_{2}(P_{1}))=\{i_{1},j_{1} \vert 2 \leq i_{1} \leq n_{1}, i_{1} \leq j_{1} \leq n_{1}\}$ and  $\mathrm{supp}(in_{<}(I_{2}(P_{2}))=\{i_{2},j_{2} \vert n_{1}+3 \leq i_{2} \leq n_{2}, i_{2} \leq j_{2} \leq n_{2}\}$.  
Hence,  $\mathrm{supp}(in_{<}(I_{2}(P_{1}))$ and $\mathrm{supp}(in_{<}(I_{2}(P_{2}))$ are 
disjoint. By \cite[Theorem 2.2]{Saha-Transversal}, $I_{2}(P_{1})$ and $I_{2}(P_{2})$ intersect 
transversally. Using Theorem \ref{Eagon-resolution}, the minimal graded resolution of $I_{2}(P_{i})$ 
is given by the Eagon-Northcott complex of $P_{i}$, say $\mathbb{M}_{i}$, for $i=1,2$. 
By \cite[Corollary 2.3]{Saha-Transversal}, $\mathbb{M}_{1}\otimes \mathbb{M}_{2}$ gives the minimal graded resolution of $I_{2}(P_{1})+I_{2}(P_{2})=I_{\mathfrak{S}_{2}}$.
\end{proof}

\begin{theorem}
The associated graded ring $\mathrm{gr}_{\mathfrak{m}}(\mathbb{K}[\mathfrak{S}_{2}])=\oplus_{i\geq 0} \frac{\mathfrak{m}_{i}}{\mathfrak{m}_{i+1}}$ is Cohen-Macaulay. Moreover, $\beta_{i}(\mathrm{gr}_{\mathfrak{m}}(\mathbb{K}[\mathfrak{S}_{2}]))=\beta_{i}(\mathbb{K}[\mathfrak{S}_{2}])$, for all $i \geq 0$.  
The Castelnuovo–Mumford regularity of $\mathrm{gr}_{\mathfrak{m}}(\mathbb{K}[\mathfrak{S}_{2}])$ is $2$.
\end{theorem}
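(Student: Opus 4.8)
\noindent\emph{Proof proposal.}
The plan is to reduce the statement to the two ``building blocks'' $\mathbb{K}[\Gamma^{2}_{\mathbf{a}_{i},\mathbf{d}_{i},h}]$, $i=1,2$, and then to describe the tangent cone of each from the determinantal picture of Theorem~\ref{Eagon-resolution}. First, by Theorem~\ref{Generator-Concatanation}, $I_{\mathfrak{S}_{2}}=I_{\Gamma_{1}}A_{12}+I_{\Gamma_{2}}A_{12}$ with $I_{\Gamma_{1}},I_{\Gamma_{2}}$ supported on disjoint variable sets, so $\mathbb{K}[\mathfrak{S}_{2}]\cong\mathbb{K}[\Gamma_{1}]\otimes_{\mathbb{K}}\mathbb{K}[\Gamma_{2}]$ and the graded maximal ideal is $\mathfrak{m}=\mathfrak{m}_{1}\otimes\mathbb{K}[\Gamma_{2}]+\mathbb{K}[\Gamma_{1}]\otimes\mathfrak{m}_{2}$; since $\mathfrak{m}^{d}=\sum_{p+q=d}\mathfrak{m}_{1}^{p}\mathfrak{m}_{2}^{q}$ and $\mathbb{K}$ is a field, a routine filtration computation gives
$$\mathrm{gr}_{\mathfrak{m}}(\mathbb{K}[\mathfrak{S}_{2}])\ \cong\ \mathrm{gr}_{\mathfrak{m}_{1}}(\mathbb{K}[\Gamma_{1}])\otimes_{\mathbb{K}}\mathrm{gr}_{\mathfrak{m}_{2}}(\mathbb{K}[\Gamma_{2}]).$$
Cohen--Macaulayness is preserved under $\otimes_{\mathbb{K}}$ (depths and dimensions add), Castelnuovo--Mumford regularity is additive under $\otimes_{\mathbb{K}}$, and the minimal graded free resolution of a $\mathbb{K}$-tensor product is the tensor product of the minimal graded free resolutions (so total Betti numbers convolve); hence it is enough to prove that each $\mathrm{gr}_{\mathfrak{m}_{i}}(\mathbb{K}[\Gamma^{2}_{\mathbf{a}_{i},\mathbf{d}_{i},h}])$ is Cohen--Macaulay, has regularity $1$, and satisfies $\beta_{j}(\mathrm{gr}_{\mathfrak{m}_{i}}(\mathbb{K}[\Gamma^{2}_{\mathbf{a}_{i},\mathbf{d}_{i},h}]))=\beta_{j}(\mathbb{K}[\Gamma^{2}_{\mathbf{a}_{i},\mathbf{d}_{i},h}])$ for all $j$. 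The target equality $\beta_{i}(\mathrm{gr}_{\mathfrak{m}}(\mathbb{K}[\mathfrak{S}_{2}]))=\beta_{i}(\mathbb{K}[\mathfrak{S}_{2}])$ then follows because (again by Theorem~\ref{Generator-Concatanation}) the $\beta_{i}(\mathbb{K}[\mathfrak{S}_{2}])$ convolve the block Betti numbers as well, and $\mathrm{reg}=1+1=2$.

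For a single block write $I:=I_{\Gamma^{2}_{\mathbf{a},\mathbf{d},h}}\subset A=\mathbb{K}[x_{1},\dots,x_{n+1}]$ and let $P$ be the $2\times n$ matrix of Theorem~\ref{Eagon-resolution}. The Gr\"obner basis of Theorem~\ref{Grobner basis} is computed for a negative degree reverse lexicographic order, which is a local degree order, hence it is a standard basis; so the ideal $I^{\ast}$ of lowest-degree initial forms is generated by the initial forms of the $2\times2$ minors of $P$, and one checks $I^{\ast}=I_{2}(\widetilde{P})$, where $\widetilde{P}$ is $P$ with its corner entry $x_{1}^{h}$ replaced by $0$ (when $h=1$, $P$ is already a matrix of linear forms, $\mathbb{K}[\Gamma^{2}_{\mathbf{a},\mathbf{d},h}]$ is standard graded, and one takes $\widetilde{P}=P$). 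Thus $\mathrm{gr}_{\mathfrak{m}}(\mathbb{K}[\Gamma^{2}_{\mathbf{a},\mathbf{d},h}])=A/I_{2}(\widetilde{P})$, a quotient by a $2\times n$ matrix whose nonzero entries are distinct variables. Exactly as in the proof of Theorem~\ref{Eagon-resolution}, the consecutive minors $x_{2}^{2}$ and $x_{l-1}x_{l+1}-x_{l}^{2}$ of $\widetilde{P}$ form a regular sequence of length $n-1$ (their leading terms $x_{2}^{2},x_{3}^{2},\dots,x_{n}^{2}$ being pairwise coprime, \cite[Lemma~2.2]{Saha-Regular} applies), so $\mathrm{grade}\,I_{2}(\widetilde{P})=n-1=\mathrm{ht}\,I_{2}(\widetilde{P})$ and by Theorem~\ref{Eagon-Northcott} the Eagon--Northcott complex of $\widetilde{P}$ is a minimal free resolution of $A/I_{2}(\widetilde{P})$ (minimal because every entry of $\widetilde{P}$ lies in the maximal ideal). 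Since the ranks in an Eagon--Northcott complex depend only on the size of the matrix, they agree with those of the Eagon--Northcott resolution of $\mathbb{K}[\Gamma^{2}_{\mathbf{a},\mathbf{d},h}]$ (Theorem~\ref{Eagon-resolution}), giving $\beta_{j}(\mathrm{gr}_{\mathfrak{m}}(\mathbb{K}[\Gamma^{2}_{\mathbf{a},\mathbf{d},h}]))=\beta_{j}(\mathbb{K}[\Gamma^{2}_{\mathbf{a},\mathbf{d},h}])$ for all $j$; as $\widetilde{P}$ has linear entries this resolution is linear past the first step, so $\mathrm{reg}(A/I_{2}(\widetilde{P}))=1$; and $\mathrm{projdim}(A/I_{2}(\widetilde{P}))=n-1=\mathrm{codim}$, so $A/I_{2}(\widetilde{P})$ is Cohen--Macaulay by Auslander--Buchsbaum.

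The step I expect to be the main obstacle is the tangent-cone identity $I^{\ast}=I_{2}(\widetilde{P})$, equivalently $\mathrm{in}_{<}(I)=\langle x_{2},\dots,x_{n}\rangle^{2}$: one must be sure that the initial forms of the standard basis generate the \emph{whole} ideal of initial forms for this particular local order, so that no lower-degree relation is overlooked, and this rests on having the complete Gr\"obner basis of $I$ in hand. A safe way to certify it is a multiplicity count: combine the evident inclusion $I_{2}(\widetilde{P})\subseteq I^{\ast}$ with the fact that $A/I_{2}(\widetilde{P})$ is Cohen--Macaulay of dimension $2$ (previous paragraph) and the equalities $e\bigl(A/I_{2}(\widetilde{P})\bigr)=\lvert\mathrm{Ap}(\Gamma^{2}_{\mathbf{a},\mathbf{d},h},E)\rvert=e\bigl(\mathrm{gr}_{\mathfrak{m}}(\mathbb{K}[\Gamma^{2}_{\mathbf{a},\mathbf{d},h}])\bigr)$ (the first computed as in the proof of Theorem~\ref{Eagon-resolution} via Theorem~\ref{Apery}, the second because multiplicity is preserved under passage to the tangent cone); additivity of multiplicity in short exact sequences of $2$-dimensional modules then forces $I^{\ast}=I_{2}(\widetilde{P})$. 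The remaining ingredients used in the first paragraph --- additivity of $\mathrm{reg}$ and convolution of Betti numbers under $\otimes_{\mathbb{K}}$, stability of Cohen--Macaulayness under $\otimes_{\mathbb{K}}$, and the passage $A/I\rightsquigarrow\mathrm{gr}_{\mathfrak{m}}(A/I)$ --- are standard and need only be cited carefully.
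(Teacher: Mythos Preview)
Your argument is correct and takes a genuinely different route from the paper's own proof.

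The paper proves all three claims directly for $\mathfrak{S}_{2}$ by invoking general semigroup-theoretic criteria: Cohen--Macaulayness of $\mathrm{gr}_{\mathfrak{m}}$ comes from the observation that none of the extremal-ray variables $x_{1},x_{n_{1}+1},x_{n_{1}+2},x_{n_{2}+1}$ divide a generator of $\mathrm{in}_{<}(I_{\mathfrak{S}_{2}})$, together with \cite{Associated-Saha}; equality of Betti numbers is deduced from the fact that every element of $\mathrm{Ap}(\mathfrak{S}_{2},E_{c})$ has a unique factorization (so $\mathfrak{S}_{2}$ is homogeneous) and \cite[Theorem 4.7]{Jafari-Type}; and the regularity is read off from $\max\{\mathrm{ord}_{\mathfrak{S}_{2}}(w):w\in\mathrm{Ap}(\mathfrak{S}_{2},E_{c})\}=2$ via \cite[Proposition 5.13]{Jafari-Reduction}. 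Your proof instead reduces to a single block by the tensor decomposition $\mathrm{gr}_{\mathfrak{m}}(\mathbb{K}[\mathfrak{S}_{2}])\cong\mathrm{gr}_{\mathfrak{m}_{1}}(\mathbb{K}[\Gamma_{1}])\otimes_{\mathbb{K}}\mathrm{gr}_{\mathfrak{m}_{2}}(\mathbb{K}[\Gamma_{2}])$, identifies each factor explicitly as the determinantal ring $A_{i}/I_{2}(\widetilde{P}_{i})$, and then reads off Cohen--Macaulayness, regularity $1$, and the Betti numbers from the Eagon--Northcott complex of $\widetilde{P}_{i}$; additivity under $\otimes_{\mathbb{K}}$ closes the argument. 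What your approach buys is self-containment and an explicit presentation of the tangent cone (no reliance on the cited criteria for homogeneous simplicial semigroups); what the paper's approach buys is that those criteria apply more broadly and avoid the delicate standard-basis/tangent-cone step you flagged. Your multiplicity-count certification of $I^{\ast}=I_{2}(\widetilde{P})$ is a clean way to close that gap. One small remark: your parenthetical ``equivalently $\mathrm{in}_{<}(I)=\langle x_{2},\dots,x_{n}\rangle^{2}$'' is not literally equivalent to $I^{\ast}=I_{2}(\widetilde{P})$ (one is a monomial ideal, the other is not), though it is the correct initial ideal once the index ranges in Theorem~\ref{Grobner basis} are read as covering all $\binom{n}{2}$ minors; it would be cleaner to drop that clause and rely on your multiplicity argument.
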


\begin{proof}
By the proof of Theorem \ref{Property-Semigroup}(3), we see that the variable corresponding to 
the extremal rays $x_{1},x_{n_{1}+1},x_{n_{1}+2},x_{n_{2}+1}$ do not divide any element of 
$G(in_{<}(I_{\mathfrak{S}_{2}}))$. Therefore, by \cite{Associated-Saha}, 
$\mathrm{gr}_{\mathfrak{m}}(\mathbb{K}[\mathfrak{S}_{2}])$ is Cohen-Macaulay. From Theorem 
\ref{Apery}, every element of $\mathrm{Ap}(\mathfrak{S}_{2})$ has a unique expression and 
hence $\mathfrak{S}_{2}$ is homogeneous. By \cite[Theorem 4.7]{Jafari-Type} 
$\beta_{i}(\mathrm{gr}_{\mathfrak{m}}(\mathbb{K}[\mathfrak{S}_{2}]))=\beta_{i}(\mathbb{K}[\mathfrak{S}_{2}])$, 
for all $i \geq 0$. 

By Theorem \ref{Apery}, $\mathrm{max}\{(\mathrm{ord}_{\mathfrak{S}_{2}}(w) \vert w \in \mathrm{Ap}(\mathfrak{S}_{2})\}=2$ . Therefore, by \cite[Proposition 5.13]{Jafari-Reduction}, 
$\mathrm{reg}(\mathrm{gr}_{\mathfrak{m}}(\mathbb{K}[\mathfrak{S}_{2}]))=\mathrm{max}\{\mathrm{ord}_{\mathfrak{S}_{2}}(w) \vert w \in \mathrm{Ap}(\mathfrak{S}_{2})\}=2$.
\end{proof}


\begin{thebibliography}{00}

\bibitem{Isabel}
I. Bermejo, E. García-Llorente and I. García-Marco. Algebraic invariants of projective monomial curves associated to generalized arithmetic sequences. J. Symbolic Comput. 81, 1–19m, 2017.


\bibitem{Bhardwaj-arithmetic}
O. Bhardwaj, I. Sengupta. On the algebraic invariants of certain affine semigroup rings. Semigroup Forum, volume 106, pages 24–50, 2023.

\bibitem{Boocher-Sparse}
A. Boocher. Free resolutions and sparse determinantal ideals. Math. Res. Lett. 19, no. 4, 805–821, 2012.

\bibitem{Conca-Determinantal}
W. Bruns, A. Conca. Gr\"{o}bner basis and Determinantal Ideals. arxiv preprint, arXiv:math/0302058v3, 2003.

\bibitem{Affine}
W. Bruns, J. Gubeladze, and N.V.Trung, Problems and algorithms for affine semigroups. Semigroup Forum 64, no. 2, 180–212, 2002. 


\bibitem{Bruns-Herzog}
W. Bruns, J. Herzog. Cohen-macaulay rings. No. 39. Cambridge university press, 1998.

\bibitem{Jafari-Reduction}
M. D'Anna, R. Jafari and F. Strazaanti. Simplicial Affine Semigroups with Monomial Minimal Reduction Ideals. Mediterr. J. Math. 19:84, 2022.

\bibitem{eisenbud}
D. Eisenbud. Commutative algebra: with a view toward algebraic geometry. Springer Science and Business Media, 2013.

\bibitem{gluing}
P. Gimenez, H. Srinivasan. The structure of the minimal free resolution of semigroup rings obtained by gluing. Journal of Pure and Applied Algebra
Volume 223, Issue 4, Pages 1411-1426, 2019.

\bibitem{Gimenez-Arithmetic}
P. Gimenez, I. Sengupta, H. Srinivasan. Minimal graded free resolutions for monomial curves defined by arithmetic sequences. J. Algebra, 388, 294–310, 2013.

\bibitem{Herzog-initial}
J. Herzog. Generic Initial Ideals and Graded Betti Numbers. Computational commutative
algebra and combinatorics (Osaka, 1999), Adv. Stud. Pure Math., vol. 33, Math. Soc. Japan,
Tokyo, 2002.

\bibitem{Jafari-Type}
R. Jafari, Y. Marjan. Type and conductor of simplicial affine semigroups. Journal of Pure and Applied Algebra 226.3: 106844., 2022.




 
 \bibitem{Associated-Saha}
J. Saha, I. Sengupta and P. Srivastava. On the associated graded ring of semigroup algebras. arXiv preprint arXiv:2210.07520, 2022.
 
\bibitem{Backelin}
J. Saha, I. Sengupta and P. Srivastava.  Projective closures of affine monomial curves. arXiv preprint arXiv:2101.12440, 2021.

  
\bibitem{Saha-Regular}  
J. Saha, I. Sengupta, and G. Tripathi. Primary decomposition and normality of certain determinantal ideals.
Proc. Indian Acad. Sci. Math. Sci. 129(4): Paper No. 55, 10, 2019.

\bibitem{Saha-Transversal}
J. Saha, I.Sengupta, and G. Tripathi. Transversal intersection of monomial ideals. Proc. Indian Acad. Sci. Math. Sci. 129: Paper No. 75, 10, 2019.
 







\end{thebibliography}
\end{document}